\newtheorem{theorem}{Theorem}[section]
\newtheorem{lemma}[theorem]{Lemma}
\newtheorem{proposition}[theorem]{Proposition}
\newtheorem{corollary}[theorem]{Corollary}
\newcommand{\gp}{{\rm gp}}
\newcommand{\gpt}{\gp_{\rm t}}
\newcommand{\gpd}{\gp_{\rm d}}
\newcommand{\gpo}{\gp_{\rm o}}
\newcommand{\mut}{\mu_{\rm t}}
\newcommand{\muo}{\mu_{\rm o}}
\newcommand{\mud}{\mu_{\rm d}}
\newcommand{\proof}{\noindent{\bf Proof.\ }}
\newcommand{\qed}{\hfill $\square$ \bigskip}
\title{Varieties of mutual-visibility and general position on Sierpi\'nski graphs}
\author{Dhanya Roy $^{a,}$\footnote{\tt dhanyaroyku@gmail.com, dhanyaroyku@cusat.ac.in} 
	\and Sandi Klav\v{z}ar $^{b,c,d,}$\footnote{\tt sandi.klavzar@fmf.uni-lj.si} 
	\and Aparna Lakshmanan S $^{a,}$\footnote{\tt aparnaren@gmail.com, aparnals@cusat.ac.in}
 \and Jing Tian $^{e,c,}$\footnote{\tt jingtian526@126.com} \\\\
	$^{a}$ \small Department of Mathematics, 	Cochin University of Science and Technology, 
	\\ \small Cochin - 22, Kerala, India\\
	$^{b}$ \small Faculty of Mathematics and Physics, University of Ljubljana, Slovenia\\
	$^{c}$ \small Institute of Mathematics, Physics and Mechanics, Ljubljana, Slovenia \\
	$^{d}$ \small Faculty of Natural Sciences and Mathematics, University of Maribor, Slovenia\\
   $^{e}$ \small School of Science, Zhejiang University of Science and Technology, \\
\small Hangzhou, Zhejiang 310023, PR China\\
}
\date{\today}
\begin{document}
	\maketitle
	
\begin{abstract}
The variety of mutual-visibility problems contains four members, as does the variety of general position problems. The basic problem is to determine the cardinality of the largest such sets. In this paper, these eight invariants are investigated on Sierpi\'nski graphs $S_p^n$. They are determined for the Sierpi\'nski graphs $S_p^2$, $p\ge 3$. All, but the outer mutual-visibility number and the outer general position number, are also determined for $S_3^n$, $n\ge 3$. In many of the cases the corresponding extremal sets are enumerated.
\end{abstract}
	
\noindent
{\bf Keywords}: mutual-visibility set, general position set, Sierpi\'nski graph
	
\medskip\noindent
{\bf AMS Subj.\ Class.\ (2020)}: 05C12, 05C69, 05C30

\section{Introduction}
	
General position and mutual-visibility are two fresh areas in metric and algorithmic graph theory. These concepts are complementary to each other, and together they represent a flourishing field of research. 

After general position sets were independently introduced (in a general setting) to graph theory in \cite{chandran-2016} and in~\cite{manuel-2018}, research in this area has expanded rapidly, a recent review article~\cite{chandran-2025} lists 115 references. These investigations include several interesting variations including edge general position sets~\cite{manuel-2022}, monophonic position sets~\cite{thomas-2024}, Steiner position sets~\cite{klavzar-2021}, vertex position sets~\cite{thankachy-2024}, mobile position sets~\cite{klavzar-2023}, and lower general position sets~\cite{distefano-2025, Kruft-2024}. See also recent studies~
\cite{araujo-2025, irsic-2024, thomas-2024a, tian-2024, tuite-2025, yao-2022}.

Given a set $X$ of vertices in a graph $G$, two vertices $u$ and $v$ are {\em $X$-positionable}, if \underline{for every shortest $u,v$-path} $P$ we have $V(P) \cap X \subseteq \{u,v\}$. (Note that if $uv\in E(G)$, then $u$ and $v$ are $X$-positionable.) Then $X$ is a {\em general position set}, if every $u, v \in X$ are $X$-positionable. A largest general position set is a {\em gp-set} and its size is the {\em general position number} $\gp(G)$ of $G$. 

Based on the motivation of robotic visibility, the graph mutual-visibility problem was introduced in 2022 by Di Stefano~\cite{distefano-2022}. Given a set $X$ of vertices in a graph $G$, two vertices $u$ and $v$ are {\em mutually-visible} with respect to $X$, shortly {\em $X$-visible}, if there \underline{exists a shortest $u,v$-path} $P$ such that $V(P) \cap X \subseteq \{u,v\}$. The set $X$ is a {\em mutual-visibility set} if any two vertices from $X$ are $X$-visible. A largest mutual-visibility set of $G$ is a {\em $\mu$-set} and its size is the {\em mutual-visibility number} $\mu(G)$ of $G$. Although only recently introduced, the mutual-visibility sets has already received a lot of attention, here we would like to point in particular to~\cite{BresarYero-2024, cicerone-2024b, Korze-2024, Kuziak-2023, Roy-2025, Tian-2024a}. 

In~\cite{cicerone-2024}, the total mutual-visibility number was introduced, while the variety of mutual-visibility invariants was rounded off in~\cite{cicerone-2023a} by adding to the list the outer mutual-visibility number and the dual mutual-visibility number. A set $X\subseteq V(G)$ is an {\em outer mutual-visibility set} in $G$ if $X$ is a mutual-visibility set and every pair of vertices $u \in X$, $v \in V(G)\setminus X$ are $X$-visible. $X$ is a {\em dual mutual-visibility set} if $X$ is a mutual-visibility set and every pair of vertices $u, v \in V(G)\setminus X$ are $X$-visible. Finally, $X$ is a {\em total mutual-visibility set} if every pair of vertices in $G$ are $X$-visible. The cardinality of a largest outer/dual/total mutual-visibility sets are respectively denoted by $\muo(G)$, $\mud(G)$, $\mut(G)$. 

Following the pattern of mutual-visibility, the variety of general position invariants was presented in~\cite{tian-2025}. The definition of the {\em outer/dual/total general position set} in $G$ is analogous, we just need to replace everywhere ``$X$-visible'' by ``$X$-positionable.'' Largest corresponding sets are called {\em $\gpo$-sets}, {\em $\gpd$-sets}, {\em $\gpt$-sets} and their sizes are the {\em outer/dual/total general position number} of $G$, respectively denoted by $\gpo(G)$, $\gpd(G)$, $\gpt(G)$.

Recently, Kor\v{z}e and Vesel~~\cite{Korze-2025+} investigated Sierpi\'nski triangle graphs $ST_3^n$ and determined $\tau(ST_3^n)$ for  $\tau\in \{\mu,\mut,\muo,\mud, \gp\}$. Sierpi\'nski triangle graphs $ST_3^n$ are obtained from the classical Sierpi\'nski graphs $S_3^n$ by contracting all the edges which do not lie in triangles. Continuing the above investigation, in this paper we determine $\tau(S_3^n)$ for  $\tau\in \{\mu,\mut, \mud, \gp, \gpt, \gpd\}$ and bound $\muo(S_3^n)$ and $\gpo(S_3^n)$. We also determine all the eight invariants for the Sierpi\'nski graphs $S_p^2$ for any $p\ge 3$. In many of the cases we also enumerate the corresponding extremal sets.

\section{Preliminaries}

For any positive integer $k$ we set $[k] = \{1,2,\ldots,k\}$ and $[k]_0 = \{0,1,\ldots,k-1\}$.

Let $G = (V(G), E(G))$ be a graph. The {\em degree} of a vertex $u$ of $G$ is the number of its adjacent vertices in $G$. For the vertices $u$ and $v$ of $G$, the length of a shortest $u,v$-path is called the {\em distance} between $u$ and $v$, and is denoted by $d_G(u,v)$. 
   
If $X\subseteq V(G)$, then the subgraph of $G$ induced by $X$ is denoted by $G[X]$. A vertex of a graph is {\em simplicial} if its neighborhood induces a complete graph. The set of simplicial vertices of $G$ is denoted by $S(G)$ and we set $s(G) = |S(G)|$. A subgraph $H$ of $G$ is {\em convex}, if for every two vertices $u$ and $v$ of $H$, every shortest $u,v$-path in $G$ is contained in $H$. 

Let $\tau\in \{\mu,\mut,\muo,\mud, \gp, \gpt,\gpo,\gpd\}$. By a {$\tau$-set} of $G$ we mean a set with the property $\tau$ of cardinality $\tau(G)$, and by {\em $\#\tau(G)$} we denote the number of $\tau$-sets of $G$. The following fact is often very useful, parts of it are already known from the literature. 

\begin{lemma}\label{lem:tau lower bound}
If $G$ is a connected graph and $\tau\in \{\mu,\mut,\muo,\mud, \gp,\gpo,\gpd\}$, then $\tau(G)\geq s(G)$.
\end{lemma}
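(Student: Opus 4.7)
The plan is to prove the stronger statement that $X := S(G)$ is simultaneously a total general position set and a total mutual-visibility set of $G$. The seven inequalities in the lemma will then follow at once, because the defining conditions of $\mu, \mut, \muo, \mud, \gp, \gpo, \gpd$ are each implied by these two total conditions (they merely restrict the universal quantifier to a subclass of pairs).

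The key step is the purely structural observation that no simplicial vertex of $G$ can occur as an internal vertex of any shortest path. Suppose, for contradiction, that a simplicial vertex $w$ lies strictly between the endpoints of some shortest $u,v$-path $P$, and let $w^-$ and $w^+$ denote its two neighbors along $P$; they are distinct because $w$ is internal to the path $P$. Since $N(w)$ induces a complete subgraph, the edge $w^- w^+$ exists in $G$, so replacing the subwalk $w^-, w, w^+$ of $P$ by that single edge yields a $u,v$-walk of strictly smaller length, contradicting the shortestness of $P$.

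Given this observation, the rest is immediate. For any $u, v \in V(G)$ and any shortest $u, v$-path $P$, every vertex of $P$ other than $u$ and $v$ is internal to $P$, hence not simplicial, hence not in $X$. Thus $V(P) \cap X \subseteq \{u, v\}$ for every shortest $u,v$-path $P$, which is exactly the statement that every pair of vertices in $G$ is $X$-positionable. Consequently $X$ is a total general position set of $G$, and since ``$X$-positionable'' is strictly stronger than ``$X$-visible'', $X$ is also a total mutual-visibility set. A direct comparison of the defining quantifiers then shows that $X$ qualifies as an outer, dual, and plain general position set, and likewise as an outer, dual, and plain mutual-visibility set, so $\tau(G) \geq |X| = s(G)$ for each $\tau$ listed. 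No real obstacle is anticipated; the only small point to watch is that the two neighbors $w^-, w^+$ of $w$ on $P$ are genuinely distinct in the shortening step, which is automatic from $w$ being an internal vertex of the path $P$.
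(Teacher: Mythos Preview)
Your proof is correct and follows essentially the same approach as the paper: show that $S(G)$ is a total general position set (because no simplicial vertex can be internal to a shortest path), and then deduce all seven inequalities from the hierarchy among the invariants. The paper compresses the argument into two lines by citing $\gpt(G)\le\tau(G)$ directly, whereas you spell out the shortening step and the implication between the defining quantifiers, but the substance is identical.
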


\proof
Since any two vertices of $G$ are $S(G)$-positionable, $\gpt(G)\geq s(G)$. The assertion follows because $\gpt(G) \le \tau(G)$ for $\tau\in \{\mu,\mut,\muo,\mud, \gp, \gpt,\gpo,\gpd\}$. 
\qed

We next collect several known results that will be needed later. 

\begin{lemma}\label{lem:mu-convex}{\rm \cite[Lemma 2.1]{distefano-2022}}
If $H$ is a convex subgraph of $G$, and $X$ a mutual-visibility set of $G$, then $X\cap V(H)$ is a mutual-visibility set of $H$.
\end{lemma}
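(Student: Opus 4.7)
The plan is to prove this directly by picking two arbitrary vertices of $X\cap V(H)$ and transferring a certifying shortest path from $G$ down to $H$, exploiting convexity at two separate points: first to keep the path inside $H$, and second to keep it shortest in $H$.

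First I would fix $u,v \in X\cap V(H)$. Since $u,v\in X$ and $X$ is a mutual-visibility set of $G$, there exists a shortest $u,v$-path $P$ in $G$ with $V(P)\cap X\subseteq \{u,v\}$. This $P$ is the candidate path to witness $X\cap V(H)$-visibility of $u$ and $v$ in $H$.

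Next I would invoke convexity of $H$ in $G$ in two ways. Because $u,v\in V(H)$ and every shortest $u,v$-path of $G$ lies in $H$, we have $V(P)\subseteq V(H)$, so $P$ is actually a path of $H$. Moreover, convexity implies $H$ is isometric, so $d_H(u,v)=d_G(u,v)$, meaning $P$ remains a shortest $u,v$-path in the subgraph $H$. Now the visibility condition transfers: $V(P)\cap(X\cap V(H)) \subseteq V(P)\cap X \subseteq \{u,v\}$, so $u$ and $v$ are $(X\cap V(H))$-visible in $H$. Since $u,v$ were arbitrary, $X\cap V(H)$ is a mutual-visibility set of $H$.

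There is no real obstacle here; the only point worth double-checking is the implicit use of convexity to ensure distances agree between $H$ and $G$, which is immediate from the definition since any geodesic in $G$ between vertices of $H$ stays inside $H$. The statement would fail for a merely induced (non-convex) subgraph, because then a shortest $u,v$-path in $H$ could be strictly longer than in $G$, and the visibility-witnessing geodesic from $G$ might not exist in $H$ at all.
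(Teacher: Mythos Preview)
Your proof is correct and is exactly the standard argument. Note, however, that the paper does not supply its own proof of this lemma: it is quoted as \cite[Lemma~2.1]{distefano-2022} and simply used as a known tool, so there is nothing in the paper to compare your argument against.
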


\begin{theorem} {\rm \cite[Theorem 5.2]{bujtas-2025+}}
\label{thm:distance two}
If $G$ is a connected graph and $X\subseteq V(G)$, then $X$ is a total mutual-visibility set of $G$ if and only if any two vertices $u$ and $v$ of $G$ with $d_G(u,v) = 2$ are $X$-visible.  
\end{theorem}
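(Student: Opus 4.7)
The plan is to handle the two directions separately. The forward direction is immediate from the definition: if $X$ is a total mutual-visibility set, then every pair of vertices of $G$ is $X$-visible, and in particular so is every pair at distance two.

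For the non-trivial direction, I would proceed by induction on $d_G(u,v)$. The base cases $d_G(u,v) \in \{0,1,2\}$ are handled at once: when $d_G(u,v) \le 1$ any shortest path has no interior vertices, so $u$ and $v$ are vacuously $X$-visible, and the case $d_G(u,v) = 2$ is exactly the hypothesis. So it remains to treat $d_G(u,v) = k \ge 3$.

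The crucial intermediate claim I would isolate is the following \emph{re-routing} step: whenever $d_G(u,v) \ge 2$, there exists a neighbor $u'$ of $u$ with $u' \notin X$ and $d_G(u',v) = d_G(u,v)-1$. To prove it, pick any shortest $u,v$-path and let $w$ be the vertex two steps along it from $u$, so that $d_G(u,w) = 2$ and $d_G(w,v) = d_G(u,v) - 2$. By the distance-two hypothesis, $u$ and $w$ are $X$-visible, so there is a shortest $u,w$-path $u,u',w$ with $u' \notin X$. The triangle inequality together with $d_G(u',v) \le d_G(u',w) + d_G(w,v) = d_G(u,v) - 1$ forces $d_G(u',v) = d_G(u,v)-1$, establishing the claim.

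With this re-routing in hand, the inductive step is straightforward. For $d_G(u,v) = k \ge 3$, let $u'$ be a neighbor of $u$ provided by the claim, so that $u' \notin X$ and $d_G(u',v) = k-1$. By the induction hypothesis there is a shortest $u',v$-path $P'$ with $V(P') \cap X \subseteq \{u',v\}$; since $u' \notin X$ this reduces to $V(P') \cap X \subseteq \{v\}$. Prepending $u$ yields a shortest $u,v$-path $P$ with $V(P) \cap X \subseteq \{u,v\}$, showing $u$ and $v$ are $X$-visible. The main obstacle to anticipate is the naive temptation to glue a shortest $u,w$-path to a shortest $w,v$-path at the distance-two vertex $w$, which could place $w \in X$ in the interior of the glued path; shifting only one step at a time, as above, sidesteps this issue and lets the induction go through cleanly.
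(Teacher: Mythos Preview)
Your proof is correct. The re-routing claim is the right idea: stepping one vertex at a time via a distance-two detour ensures each intermediate vertex can be chosen outside $X$, and the induction then goes through without difficulty. One small point you leave implicit is that prepending $u$ to $P'$ really yields a \emph{path} (i.e.\ $u\notin V(P')$); this follows since every vertex of $P'$ is at distance at most $k-1$ from $v$ while $d_G(u,v)=k$, but it is worth stating.

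As for comparison with the paper: the paper does not give its own proof of this statement. It is quoted as a known result from~\cite{bujtas-2025+} and used as a tool, so there is nothing in the present paper to compare your argument against. Your inductive argument is a standard and clean way to establish the result.
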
 

\begin{theorem} {\rm \cite[Theorems~2.1, 3.1]{tian-2025}}
\label{thm:all}
If $G$ is a connected graph and $X\subseteq V(G)$, then the following hold.
\begin{enumerate}
\item[(i)] $X$ is a total general position set of $G$ if and only if $X\subseteq S(G)$. Moreover, $\gpt(G)=s(G)$.
\item[(ii)] If $X$ is a general position set of $G$, then $X$ is a dual general position set if and only if $G-X$ is convex.
\end{enumerate}
\end{theorem}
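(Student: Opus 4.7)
The plan is to prove the two parts separately, each via a direct two-directional argument hinging on a single structural observation: any vertex lying in the interior of a shortest path has two neighbors on that path whose distance in $G$ is exactly $2$.

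For part (i), the key fact is that a simplicial vertex $w$ cannot lie in the interior of any shortest path $P$ of $G$. Indeed, if $w$ were interior to $P$, then its two $P$-neighbors $w^-, w^+$ would be adjacent (since $N(w)$ induces a clique), giving a shortcut that contradicts the minimality of $P$. Hence whenever $X \subseteq S(G)$, every shortest $u,v$-path meets $X$ only at its endpoints, so $X$ is a total general position set. For the converse I argue by contrapositive: if some $w \in X$ is not simplicial, then $w$ has two neighbors $u,v$ with $uv \notin E(G)$, making $u,w,v$ a shortest $u,v$-path whose interior vertex lies in $X$; thus $u$ and $v$ fail to be $X$-positionable. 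The equality $\gpt(G) = s(G)$ then follows, because $S(G)$ is itself a total general position set (apply the forward direction to $X = S(G)$), while no total general position set can contain a non-simplicial vertex.

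For part (ii), assume throughout that $X$ is a general position set. If $G-X$ is convex, then for any $u,v \in V(G) \setminus X$ every shortest $u,v$-path lies entirely in $G-X$ and hence avoids $X$, trivially satisfying $V(P) \cap X \subseteq \{u,v\}$; together with the standing hypothesis on $X$, this gives the dual general position property. Conversely, if $X$ is a dual general position set, pick any $u,v \in V(G)\setminus X$ and any shortest $u,v$-path $P$. The dual condition forces $V(P) \cap X \subseteq \{u,v\}$, and since $u,v \notin X$ this intersection is empty, so $P$ lies in $G-X$; this is exactly convexity of $G-X$.

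I do not anticipate any serious technical obstacle: both arguments reduce to elementary manipulations of the definitions together with the one-line simplicial-shortcut observation above. The only delicate point is bookkeeping the definitions precisely, namely that the general-position condition quantifies over \emph{every} shortest path between $u$ and $v$, and that the dual property only adds new constraints on pairs in $V(G)\setminus X$ while still retaining the original general-position condition on pairs within $X$.
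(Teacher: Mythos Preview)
Your proof is correct. Note, however, that the paper does not actually prove this theorem: it is quoted as a known result from \cite{tian-2025} (their Theorems~2.1 and~3.1) and stated without proof, so there is no ``paper's own proof'' to compare against. Your argument is the standard one and matches what one finds in the cited source: the simplicial-shortcut observation for part~(i), and the immediate unpacking of the definitions for part~(ii).
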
 

In the rest of the preliminaries we introduced Sierpi\'nski graphs $S_p^n$ and related notation required. These graphs were introduced in~\cite{KlaMil-1997} as graphs of a particular variant of the well-known Tower of Hanoi problem~\cite{hinz-2018}. 

If $p \geq 3$ and $n \geq 1$, then $S_p^n$ is defined as follows. The vertex set is $V (S_p^n) = [p]_0^n$, we will simplify the notation of a vertex $(i_1, \ldots, i_n)$ of $S_p^n$ to $i_1\cdots i_n$. Vertices $i_1\cdots i_n$ and $j_1 \cdots j_n$ being adjacent if there exists an index $h  \in [n]$, such that
\begin{enumerate}
\item[(i)] $\forall$ $t$, $t < h$ $\implies$ $i_t = j_t$, 
\item[(ii)] $i_h \neq j_h$,
\item[(iii)] $\forall$ $t$, $t > h$ $\implies$ $i_t = j_h$ and $j_t = i_h$.
\end{enumerate}
In the case $p=3$, these graphs are isomorphic to the graphs of the classical Tower of Hanoi problem. 

If $s \in [p]_0^{n-k}$, where $k \in [n-1]$, then the subgraph of $S_p^n$ induced by the vertices of the form $\{st:\ t \in [p]_0^k\}$, is isomorphic to $S_p^k$, it will be denoted by $\underline{s}S_p^{k}$.  If $i\in [p]_0$, then the notation $\underline{i}S_p^1$ will be simplified to $iS_p^1$. Note that $iS_p^1$ is isomorphic to $K_p$. The subgraphs $\underline{s}S_p^{k}$ indicate the fractal nature of Sierpi\'nski graphs by which we mean that $V(S_p^n)$ can be partitioned into $p^{n-k}$ sets each of which induces a subgraph isomorphic to $S_p^{k}$.
 
 Now, consider $p=3$. Let $k$ be a positive integer, where $1\le k\le n-2$, and let $s \in [3]_0^k$. In the subgraph $\underline{s}S_3^{n-k}$ of $S_3^n$, each of the three vertices $si^{n-k}$, $i \in [3]_0$, is the degree $2$ vertex of an induced bull, which we denote by $\underline{s}B_i^n$. (Recall that the {\em bull graph} is a graph of order five obtained from a triangle by attaching two pendant vertices to its two different vertices.) Note that some of these bulls can be isomorphic. In particular, for a fixed $i\in [3]_0$, the bulls $\underline{i^k}B_i^{n}$, $1\le k\le n-2$, are one and the same bull with the degree two vertex being the vertex $i^n$. See Fig.~\ref{fig:S_3^4} where $S_3^4$ is presented and some of its bulls emphasized.  We can infer that $$V(\underline{s}B_i^n) = \{si^{n-k-2}ji,si^{n-k-1}j:\ j \in [3]_0\}\,.$$

 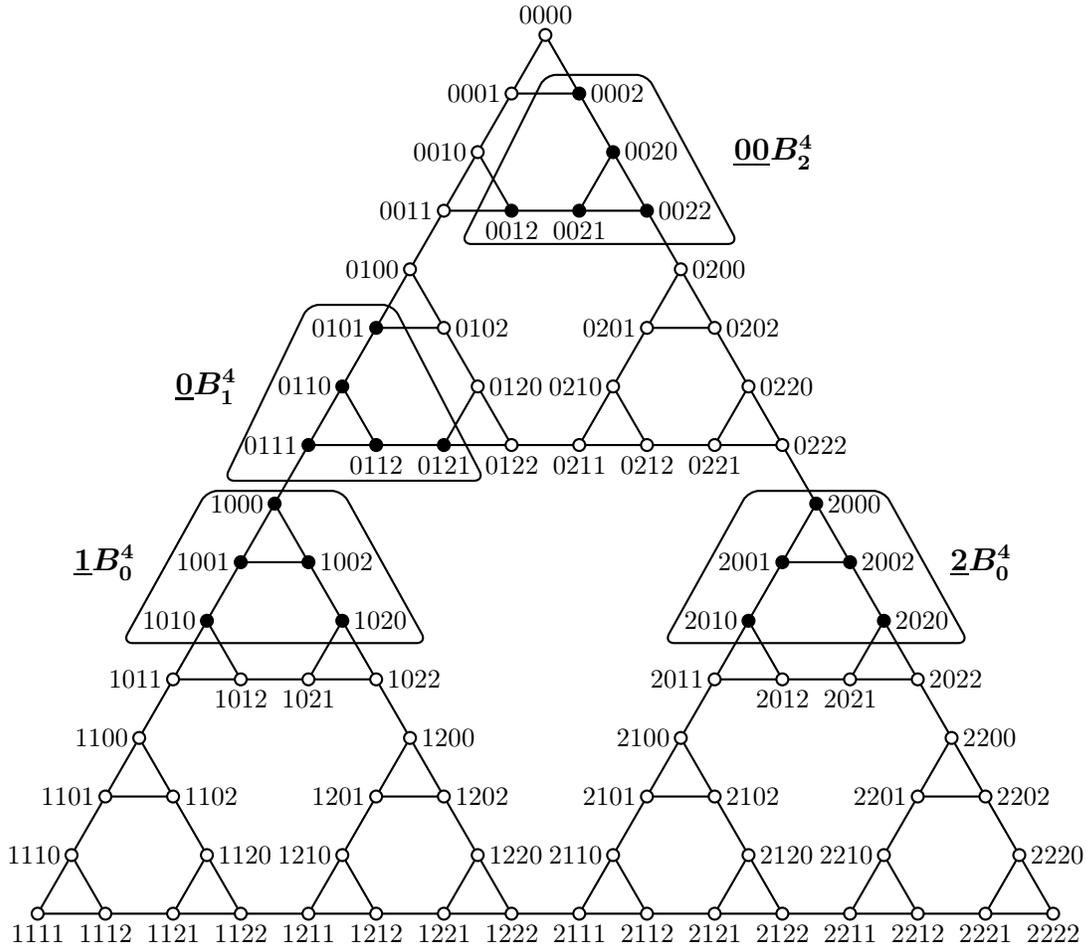
\begin{figure}[ht!]
\begin{center}
\begin{tikzpicture}[scale=0.45,style=thick,x=1cm,y=1cm]
\def\vr{5pt}
\begin{scope}[xshift=10cm, yshift=10cm] 

\coordinate(x1) at (0,0);
\coordinate(x2) at (1,1.732);
\coordinate(x3) at (2,3.464);
\coordinate(x4) at (3,5.196);
\coordinate(x5) at (4,3.464);
\coordinate(x6) at (5,1.732);
\coordinate(x7) at (6,0);
\coordinate(x8) at (4,0);
\coordinate(x9) at (2,0);

\coordinate(x10) at (8,0);
\coordinate(x11) at (9,1.732);
\coordinate(x12) at (10,3.464);
\coordinate(x13) at (11,5.196);
\coordinate(x14) at (12,3.464);
\coordinate(x15) at (13,1.732);
\coordinate(x16) at (14,0);
\coordinate(x17) at (12,0);
\coordinate(x18) at (10,0);

\coordinate(x19) at (4,6.928);
\coordinate(x20) at (5,8.66);
\coordinate(x21) at (6,10.392);
\coordinate(x22) at (7,12.124);
\coordinate(x23) at (8,10.392);
\coordinate(x24) at (9,8.66);
\coordinate(x25) at (10,6.928);
\coordinate(x26) at (8,6.928);
\coordinate(x27) at (6,6.928);


\coordinate(x28) at (16,0);
\coordinate(x29) at (17,1.732);
\coordinate(x30) at (18,3.464);
\coordinate(x31) at (19,5.196);
\coordinate(x32) at (20,3.464);
\coordinate(x33) at (21,1.732);
\coordinate(x34) at (22,0);
\coordinate(x35) at (20,0);
\coordinate(x36) at (18,0);

\coordinate(x37) at (24,0);
\coordinate(x38) at (25,1.732);
\coordinate(x39) at (26,3.464);
\coordinate(x40) at (27,5.196);
\coordinate(x41) at (28,3.464);
\coordinate(x42) at (29,1.732);
\coordinate(x43) at (30,0);
\coordinate(x44) at (28,0);
\coordinate(x45) at (26,0);

\coordinate(x46) at (20,6.928);
\coordinate(x47) at (21,8.66);
\coordinate(x48) at (22,10.392);
\coordinate(x49) at (23,12.124);
\coordinate(x50) at (24,10.392);
\coordinate(x51) at (25,8.66);
\coordinate(x52) at (26,6.928);
\coordinate(x53) at (24,6.928);
\coordinate(x54) at (22,6.928);

\coordinate(x55) at (8,13.856);
\coordinate(x56) at (9,15.588);
\coordinate(x57) at (10,17.32);
\coordinate(x58) at (11,19.052);
\coordinate(x59) at (12,17.32);
\coordinate(x60) at (13,15.588);
\coordinate(x61) at (14,13.856);
\coordinate(x62) at (12,13.856);
\coordinate(x63) at (10,13.856);

\coordinate(x64) at (16,13.856);
\coordinate(x65) at (17,15.588);
\coordinate(x66) at (18,17.32);
\coordinate(x67) at (19,19.052);
\coordinate(x68) at (20,17.32);
\coordinate(x69) at (21,15.588);
\coordinate(x70) at (22,13.856);
\coordinate(x71) at (20,13.856);
\coordinate(x72) at (18,13.856);

\coordinate(x73) at (12,20.784);
\coordinate(x74) at (13,22.516);
\coordinate(x75) at (14,24.248);
\coordinate(x76) at (15,25.98);
\coordinate(x77) at (16,24.248);
\coordinate(x78) at (17,22.516);
\coordinate(x79) at (18,20.784);
\coordinate(x80) at (16,20.784);
\coordinate(x81) at (14,20.784);

\draw (x1) -- (x2) -- (x3) -- (x4) -- (x5) -- (x6)--(x7)--(x8)--(x9)--(x1);
\draw (x2) -- (x9);
\draw (x3) -- (x5);
\draw (x6) -- (x8);

\draw (x10) -- (x11) -- (x12) -- (x13) -- (x14) -- (x15)--(x16)--(x17)--(x18)--(x10);
\draw (x11) -- (x18);
\draw (x12) -- (x14);
\draw (x15) -- (x17);

\draw (x19) -- (x20) -- (x21) -- (x22) -- (x23) -- (x24)--(x25)--(x26)--(x27)--(x19);
\draw (x20) -- (x27);
\draw (x21) -- (x23);
\draw (x24) -- (x26);

\draw (x4) -- (x19);
\draw (x25) -- (x13);
\draw (x7) -- (x10);

\draw (x28) -- (x29) -- (x30) -- (x31) -- (x32) -- (x33)--(x34)--(x35)--(x36)--(x28);
\draw (x29) -- (x36);
\draw (x30) -- (x32);
\draw (x33) -- (x35);

\draw (x37) -- (x38) -- (x39) -- (x40) -- (x41) -- (x42)--(x43)--(x44)--(x45)--(x37);
\draw (x38) -- (x45);
\draw (x39) -- (x41);
\draw (x42) -- (x44);

\draw (x46) -- (x47) -- (x48) -- (x49) -- (x50) -- (x51)--(x52)--(x53)--(x54)--(x46);
\draw (x47) -- (x54);
\draw (x48) -- (x50);
\draw (x51) -- (x53);

\draw (x31) -- (x46);
\draw (x52) -- (x40);
\draw (x34) -- (x37);

\draw (x55) -- (x56) -- (x57) -- (x58) -- (x59) -- (x60)--(x61)--(x62)--(x63)--(x55);
\draw (x56) -- (x63);
\draw (x57) -- (x59);
\draw (x60) -- (x62);

\draw (x64) -- (x65) -- (x66) -- (x67) -- (x68) -- (x69)--(x70)--(x71)--(x72)--(x64);
\draw (x65) -- (x72);
\draw (x66) -- (x68);
\draw (x69) -- (x71);

\draw (x73) -- (x74) -- (x75) -- (x76) -- (x77) -- (x78)--(x79)--(x80)--(x81)--(x73);
\draw (x74) -- (x81);
\draw (x75) -- (x77);
\draw (x78) -- (x80);

\draw (x58) -- (x73);
\draw (x79) -- (x67);
\draw (x61) -- (x64);

\draw (x22) -- (x55);
\draw (x70) -- (x49);
\draw (x16) -- (x28);

\draw(x1)[fill=white] circle(\vr) node[below]{{\footnotesize 1111}};
\draw(x2)[fill=white] circle(\vr) node[left]{{\footnotesize 1110}};
\draw(x3)[fill=white] circle(\vr) node[left]{{\footnotesize 1101}};
\draw(x4)[fill=white] circle(\vr) node[left]{{\footnotesize 1100}};
\draw(x5)[fill=white] circle(\vr) node[right]{{\footnotesize 1102}};
\draw(x6)[fill=white] circle(\vr) node[right]{{\footnotesize 1120}};
\draw(x7)[fill=white] circle(\vr) node[below]{{\footnotesize 1122}};
\draw(x8)[fill=white] circle(\vr) node[below]{{\footnotesize 1121}};
\draw(x9)[fill=white] circle(\vr) node[below]{{\footnotesize 1112}};

\draw(x10)[fill=white] circle(\vr) node[below]{{\footnotesize 1211}};
\draw(x11)[fill=white] circle(\vr) node[left]{{\footnotesize 1210}};
\draw(x12)[fill=white] circle(\vr) node[left]{{\footnotesize 1201}};
\draw(x13)[fill=white] circle(\vr) node[right]{{\footnotesize 1200}};
\draw(x14)[fill=white] circle(\vr) node[right]{{\footnotesize 1202}};
\draw(x15)[fill=white] circle(\vr) node[right]{{\footnotesize 1220}};
\draw(x16)[fill=white] circle(\vr) node[below]{{\footnotesize 1222}};
\draw(x17)[fill=white] circle(\vr) node[below]{{\footnotesize 1221}};
\draw(x18)[fill=white] circle(\vr) node[below]{{\footnotesize 1212}};

\draw(x19)[fill=white] circle(\vr) node[left]{{\footnotesize 1011}};
\draw(x20)[fill=black] circle(\vr) node[left]{{\footnotesize 1010}};
\draw(x21)[fill=black] circle(\vr) node[left]{$\boldsymbol{\underline{1}B_0^4}$ \,\,\,\, {\footnotesize 1001}};
\draw(x22)[fill=black] circle(\vr) node[left]{{\footnotesize 1000}};
\draw(x23)[fill=black] circle(\vr) node[right]{{\footnotesize 1002}};
\draw(x24)[fill=black] circle(\vr) node[right]{{\footnotesize 1020}};
\draw(x25)[fill=white] circle(\vr) node[right]{{\footnotesize 1022}};
\draw(x26)[fill=white] circle(\vr) node[below]{{\footnotesize 1021}};
\draw(x27)[fill=white] circle(\vr) node[below]{{\footnotesize 1012}};

\draw(x28)[fill=white] circle(\vr) node[below]{{\footnotesize 2111}};
\draw(x29)[fill=white] circle(\vr) node[left]{{\footnotesize 2110}};
\draw(x30)[fill=white] circle(\vr) node[left]{{\footnotesize 2101}};
\draw(x31)[fill=white] circle(\vr) node[left]{{\footnotesize 2100}};
\draw(x32)[fill=white] circle(\vr) node[right]{{\footnotesize 2102}};
\draw(x33)[fill=white] circle(\vr) node[right]{{\footnotesize 2120}};
\draw(x34)[fill=white] circle(\vr) node[below]{{\footnotesize 2122}};
\draw(x35)[fill=white] circle(\vr) node[below]{{\footnotesize 2121}};
\draw(x36)[fill=white] circle(\vr) node[below]{{\footnotesize 2112}};

\draw(x37)[fill=white] circle(\vr) node[below]{{\footnotesize 2211}};
\draw(x38)[fill=white] circle(\vr) node[left]{{\footnotesize 2210}};
\draw(x39)[fill=white] circle(\vr) node[left]{{\footnotesize 2201}};
\draw(x40)[fill=white] circle(\vr) node[right]{{\footnotesize 2200}};
\draw(x41)[fill=white] circle(\vr) node[right]{{\footnotesize 2202}};
\draw(x42)[fill=white] circle(\vr) node[right]{{\footnotesize 2220}};
\draw(x43)[fill=white] circle(\vr) node[below]{{\footnotesize 2222}};
\draw(x44)[fill=white] circle(\vr) node[below]{{\footnotesize 2221}};
\draw(x45)[fill=white] circle(\vr) node[below]{{\footnotesize 2212}};

\draw(x46)[fill=white] circle(\vr) node[left]{{\footnotesize 2011}};
\draw(x47)[fill=black] circle(\vr) node[left]{{\footnotesize 2010}};
\draw(x48)[fill=black] circle(\vr) node[left]{{\footnotesize 2001}};
\draw(x49)[fill=black] circle(\vr) node[right]{{\footnotesize 2000}};
\draw(x50)[fill=black] circle(\vr) node[right]{{\footnotesize 2002} \,\,\, $\boldsymbol{\underline{2}B_0^4}$};
\draw(x51)[fill=black] circle(\vr) node[right]{{\footnotesize 2020}};
\draw(x52)[fill=white] circle(\vr) node[right]{{\footnotesize 2022}};
\draw(x53)[fill=white] circle(\vr) node[below]{{\footnotesize 2021}};
\draw(x54)[fill=white] circle(\vr) node[below]{{\footnotesize 2012}};

\draw(x55)[fill=black] circle(\vr) node[left]{ {\footnotesize 0111}};
\draw(x56)[fill=black] circle(\vr) node[left]{$\boldsymbol{\underline{0}B_1^4}$ \,\,\,\, {\footnotesize 0110}};
\draw(x57)[fill=black] circle(\vr) node[left]{{\footnotesize 0101}};
\draw(x58)[fill=white] circle(\vr) node[left]{{\footnotesize 0100}};
\draw(x59)[fill=white] circle(\vr) node[right]{{\footnotesize 0102}};
\draw(x60)[fill=white] circle(\vr) node[right]{{\footnotesize 0120}};
\draw(x61)[fill=white] circle(\vr) node[below]{{\footnotesize 0122}};
\draw(x62)[fill=black] circle(\vr) node[below]{{\footnotesize 0121}};
\draw(x63)[fill=black] circle(\vr) node[below]{{\footnotesize 0112}};

\draw(x64)[fill=white] circle(\vr) node[below]{{\footnotesize 0211}};
\draw(x65)[fill=white] circle(\vr) node[left]{{\footnotesize 0210}};
\draw(x66)[fill=white] circle(\vr) node[left]{{\footnotesize 0201}};
\draw(x67)[fill=white] circle(\vr) node[right]{{\footnotesize 0200}};
\draw(x68)[fill=white] circle(\vr) node[right]{{\footnotesize 0202}};
\draw(x69)[fill=white] circle(\vr) node[right]{{\footnotesize 0220}};
\draw(x70)[fill=white] circle(\vr) node[right]{{\footnotesize 0222}};
\draw(x71)[fill=white] circle(\vr) node[below]{{\footnotesize 0221}};
\draw(x72)[fill=white] circle(\vr) node[below]{{\footnotesize 0212}};

\draw(x73)[fill=white] circle(\vr) node[left]{{\footnotesize 0011}};
\draw(x74)[fill=white] circle(\vr) node[left]{{\footnotesize 0010}};
\draw(x75)[fill=white] circle(\vr) node[left]{{\footnotesize 0001}};
\draw(x76)[fill=white] circle(\vr) node[above]{{\footnotesize 0000}};
\draw(x77)[fill=black] circle(\vr) node[right]{{\footnotesize 0002}};
\draw(x78)[fill=black] circle(\vr) node[right]{{\footnotesize 0020} \, \, \, $\boldsymbol{\underline{00}B_2^4}$};
\draw(x79)[fill=black] circle(\vr) node[right]{{\footnotesize 0022}};
\draw(x80)[fill=black] circle(\vr) node[below]{{\footnotesize 0021}};
\draw(x81)[fill=black] circle(\vr) node[below]{{\footnotesize 0012}};

\draw[rounded corners](2.5,8)--(5.0,12.5)--(9.0,12.5)--(11.5,8)--cycle;
\draw[rounded corners](18.5,8)--(21.0,12.5)--(25.0,12.5)--(27.5,8)--cycle;
\draw[rounded corners](5.5,12.8)--(8.0,18.0)--(10.5,18.0)--(13.2,12.8)--cycle;
\draw[rounded corners](12.5,19.8)--(15.0,24.8)--(18.0,24.8)--(20.7,19.8)--cycle;


\end{scope}
\end{tikzpicture}
\caption{$S_3^4$ and some of its bulls}
\label{fig:S_3^4}
\end{center}
\end{figure}

\section{Sierpi\'nski graphs $S_p^2$}
\label{sec:Sp2}

In the seminal paper on Sierpi\'nski graphs~\cite{KlaMil-1997} it was proved that there are at most two shortest paths between any two vertices of $S_p^n$. It was also described when one of the two cases happens. In particular, in $S_p^2$ there exist two shortest paths between any pair of vertices of the form $ik$ and $jk$, these are the paths $ik,ij, ji, jk$ and $ik, ki, kj, jk$. For all the remaining pair of vertices there exists a unique shortest path between them. 

\begin{theorem}
\label{thm:mu-Sp2}
If $p\geq 3$ then, 
$$
\mu(S_p^2) = \begin{cases}
\frac{(p+1)^2}{4}; & p \text{ odd}\,,\\[5pt]
\frac{p(p+2)}{4}; & p \text{ even}\,,
\end{cases} 
\quad \text{and} \quad 
\# \mu(S_p^2) = \begin{cases}
\binom{p}{\frac{p+1}{2}}; & p \text{ odd}\,,\\[5pt]
\binom{p+1}{\frac{p+2}{2}}; & p \text{ even}\,.
\end{cases}
$$
\end{theorem}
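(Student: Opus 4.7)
My plan is to analyze a mutual-visibility set $X$ of $S_p^2$ through its decomposition $X_i = X \cap V(iS_p^1)$ into subclique intersections. Let $A = \{i \in [p]_0 : X_i \ne \emptyset\}$ be the set of \emph{active} cliques, $a = |A|$, $x_i = |X_i|$, and let $\alpha_i = |\{j \in A \setminus \{i\} : ij \in X\}|$ count the active bridges within $X_i$. For the lower bound I take any $A \subseteq [p]_0$ of the optimal size $a$ (chosen below) and set
$$X_A = \bigcup_{i \in A} \bigl(V(iS_p^1) \setminus \{ij : j \in A \setminus \{i\}\}\bigr),$$
which has $|X_A| = a(p - a + 1)$. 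For any two vertices $u = ik, v = jl \in X_A$ in distinct active cliques the defining avoidance gives $k \ne j$ and $l \ne i$, so the length-$3$ shortest path $ik \to ij \to ji \to jl$ has both internal vertices $ij, ji$ outside $X_A$ by the same avoidance (applied with $j \in A \setminus \{i\}$ and $i \in A \setminus \{j\}$), certifying visibility.

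The central step of the upper bound is to prove $\alpha_i \le 1$ for every $i \in A$. Assume for contradiction that $ij_1, ij_2 \in X$ with $j_1, j_2 \in A$ distinct. For each $s \in \{1,2\}$ I first rule out the double-bridge configuration where both $ij_s$ and $j_s i$ lie in $X$: having $j_s i \in X$ would make any $u \in X_i \setminus \{ij_s\}$ unreachable from $j_s i$ through an $X$-clean shortest path (the canonical path goes through $ij_s$, and the degenerate second path available when $u = ii$ coincides with it), contradicting $|X_i| \ge 2$. Hence $j_s i \notin X$ for both $s$, and the one-sided bridge analysis shows that for $u = ik \in X_i \setminus \{ij_s\}$ and $v = j_s l \in X_{j_s}$ one must have $k = l$ (else the unique shortest path $ik \to ij_s \to j_s i \to j_s l$ is blocked by $ij_s$), so $X_i \setminus \{ij_s\}$ and $X_{j_s}$ share a single common second coordinate. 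Applied to both $s$, this gives $X_{j_1} \subseteq \{j_1 j_2\}$, while the Case-(ii) double-path condition on $(i, j_2)$ gives $j_1 j_2 \notin X$, hence $X_{j_1} = \emptyset$, contradicting $j_1 \in A$.

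Given $\alpha_i \le 1$, a short case split yields $x_i \le p - a + 1$. If $\alpha_i = 0$, then $X_i$ avoids the $a-1$ vertices $ij$ for $j \in A \setminus \{i\}$, so $x_i \le p - a + 1$. If $\alpha_i = 1$ with $ij \in X$, then either $ji \in X$, forcing $X_i = \{ij\}$, or $ji \notin X$, in which case any second element $ik \in X_i$ must have $k \notin A$ (else $\alpha_i \ge 2$) and $k \ne i$ (else the path $ii \to ij \to ji \to jl$ for any $jl \in X_j$ with $l \ne i$ is blocked by $ij$, forcing $X_j = \emptyset$), so $x_i \le 2$ and the existence of $k \notin A$ gives $a \le p - 1$ and $p - a + 1 \ge 2 \ge x_i$. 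Summing yields $|X| \le a(p - a + 1)$; maximizing over $a$ via the downward parabola produces $\mu(S_p^2) = \lfloor (p+1)/2 \rfloor \lceil (p+1)/2 \rceil$, which equals $(p+1)^2/4$ for $p$ odd (uniquely at $a = (p+1)/2$) and $p(p+2)/4$ for $p$ even (at both $a = p/2$ and $a = (p+2)/2$).

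For the enumeration, I will show that every $\mu$-set has $\alpha_i = 0$ throughout. Any occurrence of $\alpha_i = 1$ produces, via the partner analysis, a clique $j \in A$ with $x_j = 1$; since the optimal $a$ satisfies $a \le p - 1$ for all $p \ge 3$, we have $p - a + 1 \ge 2$, so $x_j = 1 < p - a + 1$ is a strict deficit contradicting $|X| = a(p - a + 1)$. Hence every $\mu$-set equals $X_A$ for some $A$ of optimal size: for $p$ odd this gives $\binom{p}{(p+1)/2}$ sets, and for $p$ even both $|A| = p/2$ and $|A| = (p+2)/2$ are optimal, producing $\binom{p}{p/2} + \binom{p}{(p+2)/2} = \binom{p+1}{(p+2)/2}$ sets by Pascal's identity. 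The main technical obstacle I anticipate is the simultaneous bookkeeping in the proof of $\alpha_i \le 1$: the Case-(ii) forced-partner structure must be propagated symmetrically through the two pairs $(i, j_1)$ and $(i, j_2)$ rather than via a one-shot reduction.
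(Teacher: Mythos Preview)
Your proof is correct. Both arguments rest on the clique decomposition $X_i = X\cap V(iS_p^1)$ and the explicit shortest-path structure of $S_p^2$, and both reach the bound $|X|\le a(p-a+1)$; the difference is organisational. The paper parameterises by $k=\max_i|X_i|$ and shows that a clique contributing $k$ vertices forces $k-1$ other cliques to be empty (so at most $p-k+1$ cliques are active); you parameterise dually by $a=|A|$ and bound each $x_i\le p-a+1$. For the forbidden configuration the paper embeds a convex $C_6$ and quotes $\mu(C_6)=3$, whereas you argue directly from the two-shortest-path alternative between $ij_1$ and $j_2 j_1$. Your key lemma $\alpha_i\le 1$ then pays off in the enumeration: forcing $\alpha_i\equiv 0$ on any $\mu$-set immediately gives $X=X_A$, while the paper re-derives uniqueness once the active cliques are fixed.

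One expository point: the line ``so $x_i\le 2$'' in the $\alpha_i=1$, $ji\notin X$ case does not follow from $k\notin A$ and $k\ne i$ alone; it follows from reusing the single-common-second-coordinate argument you already proved (applied to the pair $(i,j)$: every $ik\in X_i\setminus\{ij\}$ must match the second coordinate of every $jl\in X_j$). Since that argument is already in your text, the gap is only in the cross-reference, not in the mathematics.
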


\proof
If $p\geq 3$ is odd, then let 
$$X_1 = \Big\{ii, ij:\ i\in [(p+1)/2]_0, j \in [p]_0\setminus [(p+1)/2]_0\Big\}\,,$$
and if $p\geq 4$ is even, then let 
$$X_2 = \Big\{ii, ij:\ i\in [p/2]_0, j \in [p]_0\setminus [p/2]_0\Big\}\,.$$
It is straightforward to check that $X_1$ is a mutual-visibility set of $S_p^2$ if $p$ is odd, whilst $X_2$ is  a mutual-visibility set of $S_p^2$ if $p$ is even. Since $|X_1| = (p+1)^2/4$ and $|X_2| = p(p+2)/4$, we have thus shown that 
     $$\mu(S_p^2)\geq \begin{cases}
\frac{(p+1)^2}{4}; & p \text{ odd}\,,\\[5pt]
\frac{p(p+2)}{4}; & p \text{ even}\,.
\end{cases} $$

To prove that this lower bound is also an upper bound, consider an arbitrary $\mu$-set $X$ of $S_p^2$. We may without loss of generality assume that 
$$|X\cap V(0S_p^1)| = \max \{|X\cap V(iS_p^1)|:\ i\in [p]_0\}\,,$$ 
and let $|X\cap V(0S_p^1)| = k$. Since we have assumed that $X$ is a $\mu$-set of $S_p^2$, the already proved lower bound implies that $k\geq 2$. We consider the following two cases. 

\medskip\noindent
{\bf Case 1}: $00\in X\cap V(0S_p^1)$.\\
Let $0j\in X\cap V(0S_p^1)$, where $j\in [p-1]$. Since $00, 0j, j0, ji$, where $i\in [p]_0$, is the unique shortest path between $00$ and $ji$, it follows that $X\cap V(jS_p^1) = \emptyset$. As $X\cap V(0S_p^1)$ contains $k-1$ vertices different from $00$, this in turn implies that $k-1$ subgraphs of the form $iS_p^1$ contain no vertex from $X$. By the definition of $k$ we get that $|X| \le k\cdot (p-k+1)$. 

\medskip\noindent
{\bf Case 2}: $00\notin X\cap V(0S_p^1)$.\\
Let $0j, 0j'\in X\cap V(0S_p^1)$, where $j\ne j'$ and $j,j'\in[p-1]$.
We claim that either $X\cap V(jS_p^1) = \emptyset$ or $X\cap V(j'S_p^1) = \emptyset$. Suppose not. Since $0j', 0j, j0, j\ell$ is the unique shortest path between $0j'$ and $j\ell$, where $\ell\in [p]_0\setminus\{j'\}$, it follows that $X\cap V(jS_p^1) = \{jj'\}$. Analogously, since $0j, 0j', j'0, j'\ell'$ is the unique shortest between $0j$ and $j'\ell'$, where $\ell'\in [p]_0\setminus \{j\}$, we get $X\cap V(j'S_p^1) = \{j'j\}$. Hence 
$\{0j,0j',jj',j'j\}\subseteq X$. But the vertices $0j$, $0j'$, $j0$, $jj'$, $j'j$, and $j'0$ induce a cycle $C_6$, it contradicts with the fact that $\mu(C_6) = 3$. Since $X\cap V(0S_p^1)$ contains $k-1$ vertices different from $0j$, it implies that $k-1$ subgraphs of the form $iS_p^1$ contain no vertex from $X$. By the definition of $k$ we thus have $|X| \le k\cdot (p-k+1)$. 

From the above, we have 
$$|X|\leq  k\cdot (p-k+1) \leq \max\{k\cdot (p-k+1):\ k\in [p]\}\,.$$
Note that
$$
\max\{k\cdot (p-k+1):\ k\in [p]\} = \begin{cases}
\frac{(p+1)^2}{4}; & p \text{ odd}\,,\\[5pt]
\frac{p(p+2)}{4}; & p \text{ even}\,.
\end{cases} 
$$
As a consequence, we conclude that
$$
\mu(S_p^2) = \begin{cases}
\frac{(p+1)^2}{4}; & p \text{ odd}\,,\\[5pt]
\frac{p(p+2)}{4}; & p \text{ even}\,.
\end{cases}
$$

In remains to determine the number of $\mu$-sets $X$. Assume first that $p$ is odd.  In this case $|X| = \frac{(p+1)^2}{4}$, which is if and only if $k = \frac{p+1}{2}$. That is, $X$ intersects exactly $\frac{p+1}{2}$ subgraphs $iS_p^1$ in exactly $\frac{p+1}{2}$ vertices each. The selection of these subgraphs can be made in $\binom{p}{\frac{p+1}{2}}$ ways. We now claim that as soon as such a selection is made, $X$ is uniquely determined. To prove it, assume without loss of generality that $X$ has vertices in $iS_p^2$ for $i\in [(p+1)/2]_0$. Hence, if $j, k\in [(p+1)/2]_0$, then $jk\notin X$. The remaining vertices in each of $iS_p^2$ for $i\in [(p+1)/2]_0$ must thus lie in $X$, that is, $X$ is uniquely determined. This proves that $\# \mu(S_p^2) = \binom{p}{\frac{p+1}{2}}$ when $p$ is odd. 

The argument for $p$ is even is similar, except that now a $\mu$-set either intersects $\frac{p}{2}$ copies $iS_p^1$ in exactly $\frac{p+2}{2}$ vertices each, or intersects $\frac{p+2}{2}$ copies of $iS_p^1$ in exactly $\frac{p}{2}$ vertices each. In each of these cases we then proceed as above to see that a $\mu$-set is unique as soon as we select the subgraphs $iS_p^1$ which contain vertices from the $\mu$-set. Therefore if $p$ is even,
$$
\# \mu(S_p^2) = \binom{p}{\frac{p}{2}} + \binom{p}{\frac{p+2}{2}} = \binom{p+1}{\frac{p+2}{2}}\,,
$$
and we are done. 
\qed

Theorem~\ref{thm:mu-Sp2} is illustrated in Fig.~\ref{fig:mu-S32-S42} for $p\in \{3, 4\}$. For $S_3^2$ all three $\mu$-sets are shown, while for $S_4^2$ the left figure shows one of six $\mu$-sets that interest two subgraphs $iS_4^1$, and the right figure shows one of four $\mu$-sets that interest three subgraphs $iS_4^1$. 

\begin{figure}[ht!]
\begin{center}
\begin{tikzpicture}[scale=0.5,style=thick,x=1cm,y=1cm]
\def\vr{5pt}

\begin{scope}[xshift=0cm, yshift=8cm] 
\coordinate(x1) at (0,0);
\coordinate(x2) at (1,1.732);
\coordinate(x3) at (2,3.461);
\coordinate(x4) at (3,5.196);
\coordinate(x5) at (4,3.461);
\coordinate(x6) at (5,1.732);
\coordinate(x7) at (6,0);
\coordinate(x8) at (4,0);
\coordinate(x9) at (2,0);

\draw (x1) -- (x2) -- (x3) -- (x4) -- (x5) -- (x6)--(x7)--(x8)--(x9)--(x1);
\draw (x2) -- (x9);
\draw (x3) -- (x5);
\draw (x6) -- (x8);

\draw(x1)[fill=white] circle(\vr);
\draw(x2)[fill=white] circle(\vr);
\draw(x3)[fill=black] circle(\vr);
\draw(x4)[fill=black] circle(\vr);
\draw(x5)[fill=white] circle(\vr);
\draw(x6)[fill=white] circle(\vr);
\draw(x7)[fill=black] circle(\vr);
\draw(x8)[fill=black] circle(\vr);
\draw(x9)[fill=white] circle(\vr);
\end{scope}


\begin{scope}[xshift=7cm, yshift=8cm] 
\coordinate(x1) at (0,0);
\coordinate(x2) at (1,1.732);
\coordinate(x3) at (2,3.461);
\coordinate(x4) at (3,5.196);
\coordinate(x5) at (4,3.461);
\coordinate(x6) at (5,1.732);
\coordinate(x7) at (6,0);
\coordinate(x8) at (4,0);
\coordinate(x9) at (2,0);

\draw (x1) -- (x2) -- (x3) -- (x4) -- (x5) -- (x6)--(x7)--(x8)--(x9)--(x1);
\draw (x2) -- (x9);
\draw (x3) -- (x5);
\draw (x6) -- (x8);

\draw(x1)[fill=black] circle(\vr);
\draw(x2)[fill=white] circle(\vr);
\draw(x3)[fill=white] circle(\vr);
\draw(x4)[fill=black] circle(\vr);
\draw(x5)[fill=black] circle(\vr);
\draw(x6)[fill=white] circle(\vr);
\draw(x7)[fill=white] circle(\vr);
\draw(x8)[fill=white] circle(\vr);
\draw(x9)[fill=black] circle(\vr);
\end{scope}


\begin{scope}[xshift=14cm, yshift=8cm] 
\coordinate(x1) at (0,0);
\coordinate(x2) at (1,1.732);
\coordinate(x3) at (2,3.461);
\coordinate(x4) at (3,5.196);
\coordinate(x5) at (4,3.461);
\coordinate(x6) at (5,1.732);
\coordinate(x7) at (6,0);
\coordinate(x8) at (4,0);
\coordinate(x9) at (2,0);

\draw (x1) -- (x2) -- (x3) -- (x4) -- (x5) -- (x6)--(x7)--(x8)--(x9)--(x1);
\draw (x2) -- (x9);
\draw (x3) -- (x5);
\draw (x6) -- (x8);

\draw(x1)[fill=black] circle(\vr);
\draw(x2)[fill=black] circle(\vr);
\draw(x3)[fill=white] circle(\vr);
\draw(x4)[fill=white] circle(\vr);
\draw(x5)[fill=white] circle(\vr);
\draw(x6)[fill=black] circle(\vr);
\draw(x7)[fill=black] circle(\vr);
\draw(x8)[fill=white] circle(\vr);
\draw(x9)[fill=white] circle(\vr);
\end{scope}


\begin{scope}[xshift=2cm, yshift=0cm] 
\coordinate(x1) at (0,0);
\coordinate(x2) at (0,2);
\coordinate(x3) at (2,2);
\coordinate(x4) at (2,0);
\draw (x1) -- (x2) -- (x3) -- (x4) -- (x1) -- (x3);
\draw (x2) -- (x4);
\draw (2,0) -- (4,0);
\draw (2,6) -- (4,6);
\draw (0,2) -- (0,4);
\draw (6,2) -- (6,4);
\draw (2,2) -- (4,4);
\draw (2,4) -- (4,2);
\draw(x1)[fill=black] circle(\vr);
\draw(x2)[fill=white] circle(\vr);
\draw(x3)[fill=black] circle(\vr);
\draw(x4)[fill=black] circle(\vr);
\end{scope}
\begin{scope}[xshift=6cm, yshift=0cm] 
\coordinate(x1) at (0,0);
\coordinate(x2) at (0,2);
\coordinate(x3) at (2,2);
\coordinate(x4) at (2,0);
\draw (x1) -- (x2) -- (x3) -- (x4) -- (x1) -- (x3);
\draw (x2) -- (x4);
\draw(x1)[fill=white] circle(\vr);
\draw(x2)[fill=white] circle(\vr);
\draw(x3)[fill=white] circle(\vr);
\draw(x4)[fill=white] circle(\vr);
\end{scope}
\begin{scope}[xshift=2cm, yshift=4cm] 
\coordinate(x1) at (0,0);
\coordinate(x2) at (0,2);
\coordinate(x3) at (2,2);
\coordinate(x4) at (2,0);
\draw (x1) -- (x2) -- (x3) -- (x4) -- (x1) -- (x3);
\draw (x2) -- (x4);
\draw(x1)[fill=white] circle(\vr);
\draw(x2)[fill=black] circle(\vr);
\draw(x3)[fill=black] circle(\vr);
\draw(x4)[fill=black] circle(\vr);
\end{scope}
\begin{scope}[xshift=6cm, yshift=4cm] 
\coordinate(x1) at (0,0);
\coordinate(x2) at (0,2);
\coordinate(x3) at (2,2);
\coordinate(x4) at (2,0);
\draw (x1) -- (x2) -- (x3) -- (x4) -- (x1) -- (x3);
\draw (x2) -- (x4);
\draw(x1)[fill=white] circle(\vr);
\draw(x2)[fill=white] circle(\vr);
\draw(x3)[fill=white] circle(\vr);
\draw(x4)[fill=white] circle(\vr);
\end{scope}


\begin{scope}[xshift=12cm, yshift=0cm] 
\coordinate(x1) at (0,0);
\coordinate(x2) at (0,2);
\coordinate(x3) at (2,2);
\coordinate(x4) at (2,0);
\draw (x1) -- (x2) -- (x3) -- (x4) -- (x1) -- (x3);
\draw (x2) -- (x4);
\draw (2,0) -- (4,0);
\draw (2,6) -- (4,6);
\draw (0,2) -- (0,4);
\draw (6,2) -- (6,4);
\draw (2,2) -- (4,4);
\draw (2,4) -- (4,2);
\draw(x1)[fill=black] circle(\vr);
\draw(x2)[fill=white] circle(\vr);
\draw(x3)[fill=white] circle(\vr);
\draw(x4)[fill=black] circle(\vr);
\end{scope}
\begin{scope}[xshift=16cm, yshift=0cm] 
\coordinate(x1) at (0,0);
\coordinate(x2) at (0,2);
\coordinate(x3) at (2,2);
\coordinate(x4) at (2,0);
\draw (x1) -- (x2) -- (x3) -- (x4) -- (x1) -- (x3);
\draw (x2) -- (x4);
\draw(x1)[fill=white] circle(\vr);
\draw(x2)[fill=white] circle(\vr);
\draw(x3)[fill=white] circle(\vr);
\draw(x4)[fill=white] circle(\vr);
\end{scope}
\begin{scope}[xshift=12cm, yshift=4cm] 
\coordinate(x1) at (0,0);
\coordinate(x2) at (0,2);
\coordinate(x3) at (2,2);
\coordinate(x4) at (2,0);
\draw (x1) -- (x2) -- (x3) -- (x4) -- (x1) -- (x3);
\draw (x2) -- (x4);
\draw(x1)[fill=white] circle(\vr);
\draw(x2)[fill=black] circle(\vr);
\draw(x3)[fill=white] circle(\vr);
\draw(x4)[fill=black] circle(\vr);
\end{scope}
\begin{scope}[xshift=16cm, yshift=4cm] 
\coordinate(x1) at (0,0);
\coordinate(x2) at (0,2);
\coordinate(x3) at (2,2);
\coordinate(x4) at (2,0);
\draw (x1) -- (x2) -- (x3) -- (x4) -- (x1) -- (x3);
\draw (x2) -- (x4);
\draw(x1)[fill=white] circle(\vr);
\draw(x2)[fill=white] circle(\vr);
\draw(x3)[fill=black] circle(\vr);
\draw(x4)[fill=black] circle(\vr);
\end{scope}
\end{tikzpicture}
\caption{$\mu$-sets in $S_3^2$ and in $S_4^2$}
\label{fig:mu-S32-S42}
\end{center}
\end{figure}
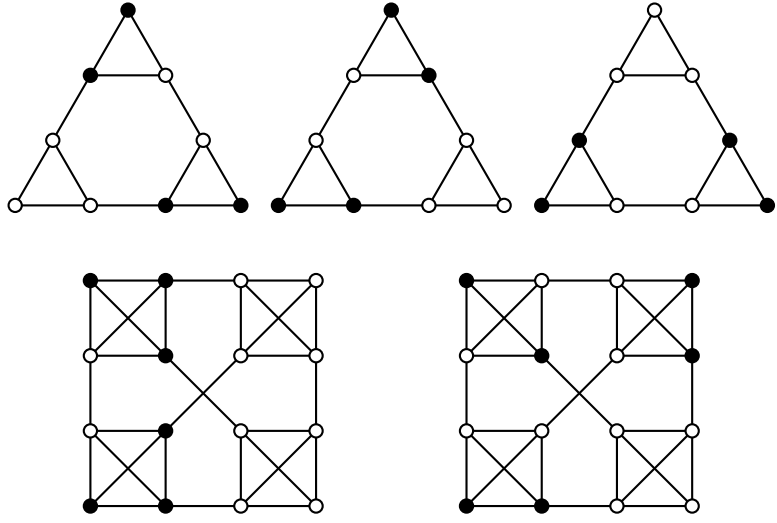

\begin{corollary}
\label{cor-gp}
If $p\geq 3$ then, 
$$\gp(S_p^2) = \begin{cases}
\frac{(p+1)^2}{4}; & p \text{ odd}\,,\\[5pt]
\frac{p(p+2)}{4}; & p \text{ even}\,,
\end{cases} 
\quad \text{and} \quad \# \gp(S_p^2) = \begin{cases}
\binom{p}{\frac{p+1}{2}}; & p \text{ odd}\,,\\[5pt]
\binom{p+1}{\frac{p+2}{2}}; & p \text{ even}\,.
\end{cases}$$
\end{corollary}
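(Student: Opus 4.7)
The plan is to derive the corollary directly from Theorem~\ref{thm:mu-Sp2} via the standing inequality $\gp(G) \le \mu(G)$, which holds for every connected $G$ because the general position condition (\emph{every} shortest path between two vertices of $X$ avoids $X$ internally) is strictly stronger than mutual-visibility (\emph{some} shortest path does). This inequality, applied to the values from Theorem~\ref{thm:mu-Sp2}, immediately gives the matching upper bound on $\gp(S_p^2)$ as well as the bound $\#\gp(S_p^2) \le \#\mu(S_p^2)$ on the number of extremal sets.

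For the matching lower bounds, I would prove that the explicit sets $X_1$ and $X_2$ constructed in the proof of Theorem~\ref{thm:mu-Sp2} are in fact general position sets, not merely mutual-visibility sets. The paper recalls that in $S_p^2$ every pair of vertices is joined by a unique shortest path, except pairs of the form $ik, jk$ with $i, j, k$ pairwise distinct, which admit the two shortest paths $ik, ij, ji, jk$ and $ik, ki, kj, jk$. For pairs with a unique shortest path the two visibility notions coincide, so only the doubled case requires inspection. Writing $X_1 = \{ii, ij : i \in I, j \notin I\}$ with $I = [(p+1)/2]_0$, the requirement that both $ik$ and $jk$ lie in $X_1$ forces $i, j \in I$ and $k \notin I$: the internal vertices $ij, ji$ of the first path then have both coordinates in $I$ and therefore lie outside $X_1$, while the internal vertices $ki, kj$ of the second path have first coordinate $k \notin I$ and so also lie outside $X_1$. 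The argument for $X_2$ is identical. Hence $X_1$ (resp.\ $X_2$) is a general position set, yielding the desired lower bound.

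For the enumeration statement, the structural classification in the proof of Theorem~\ref{thm:mu-Sp2} shows that every maximum $\mu$-set arises as a set of the form $\{ii, ij : i \in I, j \notin I\}$ for some choice of index set $I \subseteq [p]_0$ of the appropriate cardinality, and the verification in the previous paragraph uses only this structural form and is symmetric in $I$. Therefore every maximum $\mu$-set is simultaneously a maximum $\gp$-set, which combined with $\gp(S_p^2) = \mu(S_p^2)$ upgrades the inclusion of extremal families to equality, giving $\#\gp(S_p^2) = \#\mu(S_p^2)$. The one point that requires care is the case analysis in the doubled-path situation, where one must confirm that the forcing $k \notin I$ holds for every pair of vertices $ik, jk \in X_1$; once this is identified, the rest of the verification is immediate.
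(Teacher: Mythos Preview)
Your proposal is correct and follows essentially the same approach as the paper: the paper's proof simply asserts that the sets $X_1$ and $X_2$ from Theorem~\ref{thm:mu-Sp2} are general position sets and invokes $\gp(S_p^2)\le \mu(S_p^2)$, while you supply the explicit verification via the doubled-path case and spell out why the enumeration also carries over. Your treatment is in fact more complete than the paper's, which leaves both the gp-verification of $X_1,X_2$ and the passage to $\#\gp(S_p^2)$ implicit.
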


\proof
The $\mu$-sets $X_1$ and $X_2$ mentioned in the proof of Theorem~\ref{thm:mu-Sp2} are general position sets of $S_p^2$. Since $\gp(S_p^2)\le \mu(S_p^2)$, the result follows. 
\qed

\begin{theorem}\label{thm:mud-Sp2}
If $p\geq 3$, then $\mud(S_p^2) = p$ and  $\#\mud(S_p^2) = p + 1$.
\end{theorem}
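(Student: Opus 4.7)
My plan is to prove $\mud(S_p^2)=p$ and $\#\mud(S_p^2)=p+1$ simultaneously by exhibiting $p+1$ dual mutual-visibility sets of size $p$ and then showing every dual mutual-visibility set $X$ satisfies $|X|\le p$, with equality only for the exhibited sets. The $p+1$ candidates are the corner set $C=\{ii:\ i\in[p]_0\}$ and the ``clique sets'' $C_j=V(jS_p^1)$ for $j\in[p]_0$. Since $C=S(S_p^2)$, Theorem~\ref{thm:all}(i) makes it a total general position set, hence a dual mutual-visibility set. For each $C_j$, the internal condition is immediate because $jS_p^1\cong K_p$; for two vertices $u,v$ outside $C_j$ lying in different subgraphs $cS_p^1,c'S_p^1$ with $c,c'\ne j$, I verify that the shortest path through the direct bridge $cc'$--$c'c$ stays inside $V(cS_p^1)\cup V(c'S_p^1)$ and hence avoids $C_j$, while the case $c=c'$ is trivial.

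For the upper bound my main tool is a structural observation: for $a\ne b$, the distance-two pairs in $S_p^2$ whose unique length-two shortest path has middle vertex $ab$ are exactly $(aw,ba)$ for $w\in[p]_0\setminus\{b\}$. Hence if a non-corner $ab$ lies in a dual mutual-visibility set $X$, then for each such $w$ at least one of $aw,ba$ must lie in $X$, which gives the dichotomy \emph{either $ba\in X$, or $V(aS_p^1)\subseteq X$}.

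Now let $X$ be a dual mutual-visibility set. If $X\subseteq C$, then trivially $|X|\le p$ with equality iff $X=C$. Otherwise $X$ contains a non-corner $ab$ and I split on the dichotomy. In the branch $V(aS_p^1)\subseteq X$, I show $X=V(aS_p^1)$: any putative extra vertex $cm\in X$ with $c\ne a$ leads, depending on whether $m\ne a$ (pair $cm$ with $aa\in X$, unique shortest path $cm,ca,ac,aa$) or $m=a$ (pair $ca$ with $ab\in X$ for some $b\in[p]_0\setminus\{a,c\}$, which exists because $p\ge 3$, unique shortest path $ca,ac,ab$), to a shortest path passing internally through $ac\in V(aS_p^1)\subseteq X$, contradicting mutual visibility. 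In the other branch, $ba\in X$ but $V(aS_p^1)\not\subseteq X$: the unique length-two path from $aa$ to $ba$ goes through $ab\in X$, which forces $aa\notin X$ (otherwise $X$ would not even be a mutual-visibility set) and symmetrically $bb\notin X$; but then $(aa,bb)$ is a pair of non-$X$ vertices at distance three whose unique shortest path $aa,ab,ba,bb$ has both internal vertices in $X$, contradicting the dual property. Hence $|X|\le p$ with equality only for $X\in\{C,C_0,\ldots,C_{p-1}\}$, giving both claimed equalities. The most delicate step will be the branch $V(aS_p^1)\subseteq X$: I must track carefully which shortest paths in $S_p^2$ are unique and choose the right $V(aS_p^1)$-partner for each potential extra vertex $cm$, leveraging the existence of $b\in[p]_0\setminus\{a,c\}$ that $p\ge 3$ guarantees.
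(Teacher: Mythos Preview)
Your argument is correct. The paper proves $\mud(S_p^2)=p$ by a case split on $k=\max_i|X\cap V(iS_p^1)|$: the cases $k\le 1$ and $k=p$ give $|X|\le p$ directly, while $2\le k\le p-1$ is eliminated by exhibiting a pair of non-$X$ vertices whose unique shortest path passes through a vertex of $X$; the enumeration $\#\mud(S_p^2)=p+1$ is then read off from the surviving cases, with the claim that a $\mud$-set meeting each clique in exactly one vertex must equal $\{ii:i\in[p]_0\}$ asserted rather than argued. Your route pivots instead on the local dichotomy that any non-corner $ab\in X$ forces either $V(aS_p^1)\subseteq X$ or $ba\in X$; you then show the second option is incompatible with the dual property (via the pair $aa,bb$) and the first option pins $X$ down to $C_a$ exactly. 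This delivers the classification of extremal sets in the same stroke as the upper bound, so the count $p+1$ falls out without the separate, under-justified step the paper leaves for the end.
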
 

\proof
By Lemma~\ref{lem:tau lower bound}, $\mud(S_p^2)\geq p$. To prove the upper bound, we consider an arbitrary $\mud$-set $X$ of $S_p^2$. Let $X_i = X\cap V(iS_p^1)$, and let $x_i = |X_i|$ for $i\in [p]_0$. We distinguish three cases. 

If for each $i\in [p]_0$ we have $k_i\leq 1$, then $|X|\leq p$.

Assume second that there exists an index $i\in [p]_0$ such that $k_i = p$, then $k_{i'} = 0$, where $i'\in [p]_0\setminus \{i\}$. Indeed, for otherwise $00, 0i', i'0, i'j$ is the unique shortest path between $00$ and $i'j$, where $j\in [p]_0$, but this implies that the vertices $00$ and $i'j$ are not $X$-visible as $0i'\in X$. Then $|X|\leq p$. 

In the remaining case we may assume without loss of generality that $2\leq x_0\leq p-1$ and that $x_0 = \max\{x_i:\ i\in [p]_0\}$. Then there exist vertices $0i\in X$ and $0j\notin X$. In $iS_p^1$ there exists a vertex $ik\notin X$. Since $0j, 0i, i0, ik$ is the unique shortest path, the vertices $0j$ and $jk$ are not $X$-visible, hence this last case is not possible. In consequence, we have $|X|\leq p$, and we can conclude that $\mud(S_p^2) = p$.

From the above, the only possibilities that $X$ is a $\mud$-set is that $X$ contains all vertices of some $iS_p^1$, or that $X$ has exactly one vertex from each of them. In the first case, we find $\mud$-sets $V(iS_p^1)$, $i\in [p]_0$, while in the second case the only $\mud$-set is $\{ii:\ i\in [p]_0\}$. Hence $\#\mud(S_p^2) = p + 1$.
\qed

Theorem~\ref{thm:mud-Sp2} is illustrated in Fig.~\ref{fig:mud-S42} on the case of $S_4^2$. The left figure shows one of four $\mud$-sets which respectively contain sets $V(iS_4^1)$, the right figure shows the unique $\mu$-set which interests each subgraph $iS_4^1$. 

\begin{figure}[ht!]
\begin{center}
\begin{tikzpicture}[scale=0.5,style=thick,x=1cm,y=1cm]
\def\vr{5pt}

\begin{scope}[xshift=2cm, yshift=0cm] 
\coordinate(x1) at (0,0);
\coordinate(x2) at (0,2);
\coordinate(x3) at (2,2);
\coordinate(x4) at (2,0);
\draw (x1) -- (x2) -- (x3) -- (x4) -- (x1) -- (x3);
\draw (x2) -- (x4);
\draw (2,0) -- (4,0);
\draw (2,6) -- (4,6);
\draw (0,2) -- (0,4);
\draw (6,2) -- (6,4);
\draw (2,2) -- (4,4);
\draw (2,4) -- (4,2);
\draw(x1)[fill=black] circle(\vr);
\draw(x2)[fill=black] circle(\vr);
\draw(x3)[fill=black] circle(\vr);
\draw(x4)[fill=black] circle(\vr);
\end{scope}
\begin{scope}[xshift=6cm, yshift=0cm] 
\coordinate(x1) at (0,0);
\coordinate(x2) at (0,2);
\coordinate(x3) at (2,2);
\coordinate(x4) at (2,0);
\draw (x1) -- (x2) -- (x3) -- (x4) -- (x1) -- (x3);
\draw (x2) -- (x4);
\draw(x1)[fill=white] circle(\vr);
\draw(x2)[fill=white] circle(\vr);
\draw(x3)[fill=white] circle(\vr);
\draw(x4)[fill=white] circle(\vr);
\end{scope}
\begin{scope}[xshift=2cm, yshift=4cm] 
\coordinate(x1) at (0,0);
\coordinate(x2) at (0,2);
\coordinate(x3) at (2,2);
\coordinate(x4) at (2,0);
\draw (x1) -- (x2) -- (x3) -- (x4) -- (x1) -- (x3);
\draw (x2) -- (x4);
\draw(x1)[fill=white] circle(\vr);
\draw(x2)[fill=white] circle(\vr);
\draw(x3)[fill=white] circle(\vr);
\draw(x4)[fill=white] circle(\vr);
\end{scope}
\begin{scope}[xshift=6cm, yshift=4cm] 
\coordinate(x1) at (0,0);
\coordinate(x2) at (0,2);
\coordinate(x3) at (2,2);
\coordinate(x4) at (2,0);
\draw (x1) -- (x2) -- (x3) -- (x4) -- (x1) -- (x3);
\draw (x2) -- (x4);
\draw(x1)[fill=white] circle(\vr);
\draw(x2)[fill=white] circle(\vr);
\draw(x3)[fill=white] circle(\vr);
\draw(x4)[fill=white] circle(\vr);
\end{scope}


\begin{scope}[xshift=12cm, yshift=0cm] 
\coordinate(x1) at (0,0);
\coordinate(x2) at (0,2);
\coordinate(x3) at (2,2);
\coordinate(x4) at (2,0);
\draw (x1) -- (x2) -- (x3) -- (x4) -- (x1) -- (x3);
\draw (x2) -- (x4);
\draw (2,0) -- (4,0);
\draw (2,6) -- (4,6);
\draw (0,2) -- (0,4);
\draw (6,2) -- (6,4);
\draw (2,2) -- (4,4);
\draw (2,4) -- (4,2);
\draw(x1)[fill=black] circle(\vr);
\draw(x2)[fill=white] circle(\vr);
\draw(x3)[fill=white] circle(\vr);
\draw(x4)[fill=white] circle(\vr);
\end{scope}
\begin{scope}[xshift=16cm, yshift=0cm] 
\coordinate(x1) at (0,0);
\coordinate(x2) at (0,2);
\coordinate(x3) at (2,2);
\coordinate(x4) at (2,0);
\draw (x1) -- (x2) -- (x3) -- (x4) -- (x1) -- (x3);
\draw (x2) -- (x4);
\draw(x1)[fill=white] circle(\vr);
\draw(x2)[fill=white] circle(\vr);
\draw(x3)[fill=white] circle(\vr);
\draw(x4)[fill=black] circle(\vr);
\end{scope}
\begin{scope}[xshift=12cm, yshift=4cm] 
\coordinate(x1) at (0,0);
\coordinate(x2) at (0,2);
\coordinate(x3) at (2,2);
\coordinate(x4) at (2,0);
\draw (x1) -- (x2) -- (x3) -- (x4) -- (x1) -- (x3);
\draw (x2) -- (x4);
\draw(x1)[fill=white] circle(\vr);
\draw(x2)[fill=black] circle(\vr);
\draw(x3)[fill=white] circle(\vr);
\draw(x4)[fill=white] circle(\vr);
\end{scope}
\begin{scope}[xshift=16cm, yshift=4cm] 
\coordinate(x1) at (0,0);
\coordinate(x2) at (0,2);
\coordinate(x3) at (2,2);
\coordinate(x4) at (2,0);
\draw (x1) -- (x2) -- (x3) -- (x4) -- (x1) -- (x3);
\draw (x2) -- (x4);
\draw(x1)[fill=white] circle(\vr);
\draw(x2)[fill=white] circle(\vr);
\draw(x3)[fill=black] circle(\vr);
\draw(x4)[fill=white] circle(\vr);
\end{scope}
\end{tikzpicture}
\caption{$\mud$-sets in $S_4^2$}
\label{fig:mud-S42}
\end{center}
\end{figure}
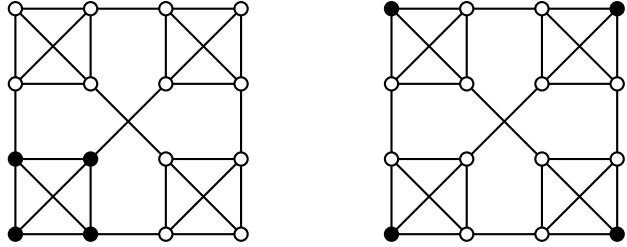

    \begin{corollary}
    \label{cor:gpd-Sp2}
    If $p\geq 3$, then $\gpd(S_p^2) = p$ and $\# \gpd(S_p^2) = 1$.
    \end{corollary}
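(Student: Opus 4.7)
The plan is to sandwich $\gpd(S_p^2)$ between $s(S_p^2)$ (via Lemma~\ref{lem:tau lower bound}) and $\mud(S_p^2)$, using that every dual general position set is a fortiori a dual mutual-visibility set. The simplicial vertices of $S_p^2$ are precisely the $p$ corners $ii$, $i\in[p]_0$: each such vertex has the clique $V(iS_p^1)\setminus\{ii\}$ as its neighborhood, whereas every other vertex $ij$ (with $i\ne j$) additionally has the bridge-neighbor $ji$, which is not adjacent to any of the remaining neighbors $ik$ of $ij$. Hence $s(S_p^2)=p$, and combined with Theorem~\ref{thm:mud-Sp2} this yields $\gpd(S_p^2)=p$.

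For the enumeration, any $\gpd$-set of $S_p^2$ is simultaneously a $\mud$-set of size $p$, so it is one of the $p+1$ candidates identified in the proof of Theorem~\ref{thm:mud-Sp2}: the $p$ copies $V(iS_p^1)$, and the set $C=\{ii:i\in[p]_0\}$. I would then apply Theorem~\ref{thm:all}(ii), which reduces the task to checking, for each candidate, whether it is a general position set with convex complement. Each $V(iS_p^1)$ induces a $K_p$ and so is trivially a general position set; however, its complement is not convex. Taking $i=0$ for concreteness, the pair $10, 20$ (which is of the form $jk, j'k$ with $k=0$) exhibits, by the two-shortest-paths phenomenon recalled at the start of Section~\ref{sec:Sp2}, a shortest path $10, 01, 02, 20$ that enters $V(0S_p^1)$ in parallel with the shortest path $10, 12, 21, 20$ that avoids it. By contrast, for distinct $i,j$ the unique shortest $ii, jj$-path $ii, ij, ji, jj$ has interior disjoint from $C$, so $C$ is a general position set; and $S_p^2-C$ is convex because a simplicial vertex can never be an interior vertex of a shortest path (any two of its neighbors being adjacent would shortcut it). Only $C$ survives, giving $\#\gpd(S_p^2)=1$.

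The main obstacle is the convexity step, which must be argued in opposite directions for the two families of candidates: non-convexity for the $V(iS_p^1)$ candidates is obtained by exhibiting the doubled shortest paths that are special to $S_p^2$, while convexity of $S_p^2-C$ is a clean instance of the general simplicial-neighborhood principle. Once these two observations are in place, the corollary follows immediately.
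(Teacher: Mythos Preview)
Your argument is correct and follows the paper's overall strategy: sandwich $\gpd(S_p^2)$ between $p$ and $\mud(S_p^2)=p$, then identify the unique $\gpd$-set among the $\mud$-sets listed in Theorem~\ref{thm:mud-Sp2}. The only differences are in the tools you invoke. For the lower bound the paper verifies directly that $\{ii:i\in[p]_0\}$ is a dual general position set, whereas you appeal to Lemma~\ref{lem:tau lower bound} after identifying the simplicial vertices; these are equivalent. For the enumeration the paper simply asserts that $\{ii:i\in[p]_0\}$ is the only $\gpd$-set, while you supply a full justification via Theorem~\ref{thm:all}(ii), ruling out each $V(iS_p^1)$ by exhibiting the non-convex pair $10,20$ and confirming convexity of the complement of $\{ii:i\in[p]_0\}$ through the simplicial-vertex shortcut argument. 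Your route is slightly more systematic and self-contained, but the underlying approach is the same.
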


    \begin{proof}
    It is straightforward to check that the set $\{ii:\ i\in [p]_0\}$ is a dual general position set of $S_p^2$, hence $\gpd(S_p^2)\geq p$. By the definitions of mutual-visibility and general position we have $\gpd(G)\leq \mud(G)$, in view of Theorem~\ref{thm:mud-Sp2}, hence $\gpd(S_p^2) = p$. Moreover, the set $\{ii:\ i\in [p]_0\}$ is the only largest dual general position set of $S_p^2$, hence we are done.
    \end{proof}
    \qed 

\begin{theorem}\label{th: mut-muo-sp2}
If $p\geq 3$ and $\tau\in \{\mut, \muo, \gpt, \gpo\}$, then 
$\tau(S_p^2) = p$ and $\# \tau(S_p^2) = 1$.
\end{theorem}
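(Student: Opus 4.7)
The plan is to deduce all four equalities by combining the total general position machinery of Theorem~\ref{thm:all}(i) with a direct upper bound argument for $\muo(S_p^2)$. First I would verify that the simplicial vertices of $S_p^2$ are precisely $\{ii:i\in [p]_0\}$: the neighbors of $ii$ lie entirely in $iS_p^1 \cong K_p$ and hence form a clique, whereas any $ij$ with $i\neq j$ has the external neighbor $ji$ which is not adjacent to any of its other neighbors (all of which lie in $iS_p^1$). Thus $s(S_p^2)=p$, and Theorem~\ref{thm:all}(i) yields $\gpt(S_p^2)=p$ with $\{ii:i\in [p]_0\}$ as the unique $\gpt$-set. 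Lemma~\ref{lem:tau lower bound} then gives $\tau(S_p^2)\geq p$ for $\tau\in\{\mut,\muo,\gpo\}$.

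The crux is the bound $\muo(S_p^2)\leq p$. Let $X$ be an outer mutual-visibility set with $|X|\geq p+1$. By pigeonhole some copy $iS_p^1$ meets $X$ in two vertices; after relabelling assume $0a,0b\in X$ with $a\neq b$ and $a\neq 0$. The fact from~\cite{KlaMil-1997} recalled at the start of Section~\ref{sec:Sp2} says that $0b$ and $aa$, having distinct second coordinates, admit a unique shortest path, which one checks is $0b,0a,a0,aa$. The interior vertex $0a$ lies in $X\setminus\{0b,aa\}$, so $0b$ and $aa$ are not $X$-visible, contradicting mutual visibility if $aa\in X$ and the outer condition if $aa\notin X$. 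Hence every copy meets $X$ in at most one vertex and $|X|\leq p$.

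For the enumeration, write any $\muo$-set of size $p$ as $X=\{ik_i:i\in [p]_0\}$. If $k_j\neq j$ for some $j$, then $jj\notin X$, and for every $i\neq j$ the unique shortest $(ik_i,jj)$-path---of length $3$ via $ik_i,ij,ji,jj$ when $k_i\neq j$, and of length $2$ via $ij,ji,jj$ when $k_i=j$---contains $ji$ as an interior vertex. Outer visibility of $ik_i$ and $jj$ then forces $ji\notin X$, that is, $k_j\neq i$; ranging $i$ over $[p]_0\setminus\{j\}$ yields $k_j=j$, a contradiction. Hence $X=\{ii:i\in [p]_0\}$, which by Theorem~\ref{thm:all}(i) is a total general position set and in particular a $\muo$-set, giving $\muo(S_p^2)=p$ and $\#\muo(S_p^2)=1$.

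For the remaining two invariants, $\mut\leq\muo$ and $\gpo\leq\muo$ combined with the lower bounds yield $\mut(S_p^2)=\gpo(S_p^2)=p$; any extremal set for either must be an $\muo$-set, hence equals $\{ii:i\in [p]_0\}$, which is both a total mutual-visibility set and an outer general position set. The main obstacle is the upper bound for $\muo$ in the second paragraph: among the four outer/dual/total invariants, $\muo$ is not bounded above by any of the invariants already computed in Section~\ref{sec:Sp2}, so the bound must be extracted directly from the unique-shortest-path structure inside each copy $iS_p^1$.
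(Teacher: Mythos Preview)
Your proof is correct. The core step---the upper bound $\muo(S_p^2)\le p$ via a unique shortest path through a copy $iS_p^1$---is essentially the paper's argument (the paper uses the length-$2$ path $0i,0j,j0$ where you use the length-$3$ path $0b,0a,a0,aa$, but the mechanism is identical). The organization around that step differs in a useful way: the paper deduces $\mut(S_p^2)=p$ from the previously proved Theorem~\ref{thm:mud-Sp2} via $\mut\le\mud$, and obtains the $\mut$-uniqueness from Theorem~\ref{thm:distance two}; you instead route everything through $\muo$ via $\mut\le\muo$ and $\gpo\le\muo$, so your argument is self-contained and does not need the dual result. Your enumeration argument for the $\muo$-set is also more explicit than the paper's one-line ``This argument also implies\ldots''; the paper's sentence is correct but your derivation that $k_j\ne i$ for all $i\ne j$ forces $k_j=j$ spells out exactly why.
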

	
\proof
By Lemma~\ref{lem:tau lower bound} we have $\tau(S_p^2)\geq p$ for any $\tau\in \{\mut, \muo, \gpt, \gpo\}$.

We first consider the total mutual-visibility. Since $\mut(S_p^2) \le \mud(S_p^2)$, Theorem~\ref{thm:mud-Sp2} implies $\mut(S_p^2) = p$. Let $X$ be an arbitrary $\mut$-set of $S_p^2$. We will show that $X = \{ii:\ i\in [p]_0\}$. If $i\ne j$, then considering the vertices $ii$, $ij$, and $ji$, Theorem~\ref{thm:distance two} implies that $ij\notin X$ and $ji\notin X$. It follows that $X\subseteq \{ii:\ i\in [p]_0\}$. Moreover, the set  $\{ii:\ i\in [p]_0\}$ is a total mutual-visibility set, hence this set is the unique $\mut$-set of $S_p^2$.   

Consider next the outer mutual-visibility. To prove that $\muo(S_p^2)\leq p$, let $Y$ be an arbitrary $\muo$-set of $S_p^2$ and let $Y_i = Y\cap V(iS_p^1)$ for $i\in [p]_0$. If $|Y_i| \leq 1$ for each $i\in [p]_0$, then there is nothing to prove. In the rest we may hence assume that $|Y_0| \geq 2$ and that $|Y_0| = \max \{|Y_i|:\ i\in [p]_0\}$. Then there exists a vertex $0j\in Y_0$, where $j\in [p-1]$. Since $0i, 0j, j0$ is the unique shortest path between $0i$ and $j0$, we get that $0i\notin Y$ and $j0\notin Y$. But this implies that $0j$ is the unique vertex in $Y_0$, which contradicts our assumption that  $|Y_0|\geq 2$. Hence $|Y|\leq p$, and we have $\muo(S_p^2)  = p$. This argument also implies that $Y$ is a $\muo$-set if and only if $|Y_i| = 1$ and $Y\cap V(iS_p^1) = \{ii\}$ for $i\in [p]_0$. 

Similar as to the above arguments, the set $\{ii: 1\le i \le p\}$ is the only $\gpt$-set as well as the only $\gpo$-set of $S_p^2$. Hence $\gpt(S_p^2) = \gpo(S_p^2) = p$ and $\# \gpt(S_p^2) = \# \gpo(S_p^2) = 1$.
\qed

\section{Sierpi\'nski graphs $S_3^n$}
\label{sec:S3n}
In this section, we consider varities of mutual-visibility problems and general position problems on the Sierpi\'nski graphs $S_3^n$.

\begin{theorem}\label{thm:S3n-mu-total-dual}
If $n\geq 1$, then $\mut(S_3^n) = \mud(S_3^n) = 3$. Moreover, $\#\mut(S_3^n) = 1$ and $\#\mud(S_3^n) = 4$.
\end{theorem}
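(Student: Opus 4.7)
The plan is to combine Lemma~\ref{lem:tau lower bound} for the lower bound with a distance-two analysis for the upper bound, then enumerate the extremal sets using local constraints on the smallest triangles and their bridges. The foundation is a structural fact I would establish first: for any non-corner vertex $v\in V(S_3^n)$, with triangle-mates $u_1,u_2$ in the unique smallest triangle $\underline{s}S_3^1$ containing $v$ and unique external neighbor $c$, both $(u_1,c)$ and $(u_2,c)$ are at distance $2$ with $v$ as their \emph{unique} common neighbor, because the bridge between the smallest triangles of $v$ and of $c$ is the single edge $vc$. Lemma~\ref{lem:tau lower bound} then gives $\mud(S_3^n)\ge\mut(S_3^n)\ge s(S_3^n)=3$, the simplicial set being $S(S_3^n)=\{0^n,1^n,2^n\}$.

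For $\mut$, the structural fact together with Theorem~\ref{thm:distance two} forces every non-corner out of any total mutual-visibility set, so $\mut(S_3^n)=3$ with unique $\mut$-set $\{0^n,1^n,2^n\}$. The forward check is immediate: each corner has degree $2$ and its two neighbors are adjacent, so no distance-two pair can have a corner as an interior vertex.

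For $\mud$, let $X$ be a $\mud$-set. For every non-corner $v\in X$, the structural fact yields the dichotomy ``$c\in X$ or both $u_1,u_2\in X$.'' The crux is ruling out the first alternative: if $\{v,c\}\subseteq X$, then mutual-visibility of $X$ applied to the distance-$2$ pair $(u_j,c)$ with unique common neighbor $v\in X$, and symmetrically to $(v,c_j)$ with unique common neighbor $c\in X$, forces $u_1,u_2,c_1,c_2\notin X$; but then an appropriate pair $(u_i,c_j)\in V\setminus X$ is at distance $3$ with unique shortest path $u_i-v-c-c_j$ whose two interior vertices lie in $X$, contradicting dual visibility. Hence every non-corner in $X$ pulls its entire smallest triangle into $X$, so $X$ is either contained in $\{0^n,1^n,2^n\}$ (giving $X=\{0^n,1^n,2^n\}$) or contains some smallest triangle $\underline{s}S_3^1$.

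Inner triangles (with $s\ne i^{n-1}$ for all $i$) are then eliminated by exhibiting two external neighbors $c_i,c_j$ whose unique shortest path is $c_i-si-sj-c_j$ with both interior vertices in $X$, generalizing the $\{010,011,012\}$ obstruction. So $X\supseteq V(\underline{i^{n-1}}S_3^1)$ for some $i$. Any further vertex $y\in X$ is ruled out: another corner $m^n$ produces the diametric pair $(i^n,m^n)$ whose unique shortest path has first internal vertex $i^{n-1}m\in V(\underline{i^{n-1}}S_3^1)\subseteq X$, and a non-corner forces (by the same dichotomy) a second disjoint smallest triangle into $X$, after which a cross pair in $X$ between the two triangles has a unique shortest path blocked by $X$. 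This leaves exactly the four sets $\{0^n,1^n,2^n\}$ and $V(\underline{i^{n-1}}S_3^1)$, $i\in[3]_0$, each verified directly; for the corner triangles I would induct on $n$, using Lemma~\ref{lem:mu-convex} to restrict to $\underline{i}S_3^{n-1}$ and noting that pairs of non-$X$ vertices outside $\underline{i}S_3^{n-1}$ have shortest paths not touching it. The main technical obstacle is the uniqueness step in the bridge exclusion: one must verify that among the four candidate pairs $(u_i,c_j)$ at least one admits no alternative length-$3$ route around $\{v,c\}$, which follows from a brief case analysis of the bridge structure around the smallest triangle of $v$.
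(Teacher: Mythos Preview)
Your approach is correct and essentially the same as the paper's, though organized differently. The paper shows in one stroke that any $\mud$-set $X$ satisfies $X\subseteq W:=V(\underline{0^{n-1}}S_3^1)\cup V(\underline{1^{n-1}}S_3^1)\cup V(\underline{2^{n-1}}S_3^1)$: for $x\in X\setminus W$ with external neighbor $x_1$ and triangle-mates $x_2,x_3$, it finds a convex $P_4$ with middle edge $xx_1$ (if $x_1\in X$) or $xx_2$ (otherwise, when $x_2,x_3\in X$) and derives a contradiction. Your bridge-exclusion step is precisely the paper's first $P_4$ case, and your inner-triangle-elimination step is precisely its second $P_4$ case; you simply insert the intermediate claim ``non-corner in $X$ $\Rightarrow$ its whole triangle lies in $X$'' between them. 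After $X\subseteq W$ is established, both arguments enumerate the four sets in the same way. Your $\mut$ argument via Theorem~\ref{thm:distance two} is equivalent to the paper's observation that every degree-$3$ vertex is the center of a convex $P_3$. The paper's packaging is more compact; yours makes the logic more transparent and, to its credit, explicitly flags the unique-shortest-path verification (your ``main technical obstacle'') that the paper leaves implicit when asserting the convex $P_4$ exists.
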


\proof
Clearly, $\mut(S_3^1) = \mud(S_3^1) = 3$ and by Theorems~\ref{th: mut-muo-sp2} and~\ref{thm:mud-Sp2} also $\mut(S_3^2) = \mud(S_3^2) = 3$. Hence in the remaining proof we may assume that $n\geq 3$.

By Lemma~\ref{lem:tau lower bound} we have $\mut(S_3^n)\geq 3$ so that 
$3\le \mut(S_3^n) \le \mud(S_3^n)$. To prove that $\mut(S_3^n) = \mud(S_3^n) = 3$ it thus suffices to show that $\mud(S_3^n) \leq 3$.  Let $X$ be an arbitrary $\mud$-set of $S_3^n$. We claim that 
$$X\subseteq W = V(0^{n-1}S_3^1)\cup V(1^{n-1}S_3^1)\cup V(2^{n-1}S_3^1)\,.$$ 
Suppose on the contrary that there exists a vertex $x\in X\setminus W$. Then the degree of $x$ is $3$, let $x_1, x_2, x_3$ be the neighbors of $x$, where $x_2x_3\in E(S_3^n)$. Since $X$ is a dual mutual-visibility set, either $x_1\in X$ and $x_2,x_3\notin X$, or $x_1\notin X$ and $x_2,x_3\in X$. In the first case consider a convex $P_4$ in which the edge $xx_1$ is the middle edge to get a contradiction that $X$ is a dual mutual-visibility set. In the second case we proceed similarly, expect that now the middle edge of a considered convex $P_4$ is $xx_2$. This contradiction proves the claim. 

Assume now that $i^{n-1}j\in X$, where $i,j\in [3]_0$, $i\ne j$. Then as above, considering the neighbors of $i^{n-1}j$ we infer that then $X = \{i^{n}, i^{n-1}j, i^{n-1}k\}$, where $\{i,j,k\} = [3]_0$. In this way we get the following $\mud$-sets: $\{0^{n}, 0^{n-1}1, 0^{n-1}2\}$, $\{1^{n}, 1^{n-1}0, 1^{n-1}2\}$, and $\{2^{n}, 2^{n-1}0, 2^{n-1}1\}$. So, if some vertex of the form $i^{n-1}j$ lies in $X$, then $X$ is one of these three sets. The last possibility for $X$ is then $\{0^n, 1^n, 2^n\}$ which is also a dual mutual-visibility set. We have thus proved that $\mud(S_3^n) \leq 3$ and that $\#\mud(S_3^n) = 4$.

Finally, note that any vertex of $S_p^n$ of degree $p$ is the middle vertex of a convex $P_3$. By~\cite[Lemma~5]{Tian-2024a} we get that such a vertex lies in no total mutual-visibility set. We can conclude that $\{0^n, 1^n, 2^n\}$ is the unique total mutual-visibility set. 
\qed

\begin{corollary}\label{cor:gpt-gpd-sierpinski}
If $n\geq 1$, then $\gpt(S_3^n) = \gpd(S_3^n) = 3$. Moreover, $\#\gpt(S_3^n) = 1$ and $\#\gpd(S_3^n) = 1$.
\end{corollary}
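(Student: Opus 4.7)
The plan is to deduce both equalities by combining Theorem~\ref{thm:S3n-mu-total-dual} with the general principles of Theorem~\ref{thm:all}, after first pinning down the simplicial vertices of $S_3^n$. Every non-corner vertex of $S_3^n$ has degree three, with one external neighbor that is non-adjacent to its two triangle-neighbors, so $S(S_3^n) = \{0^n, 1^n, 2^n\}$ and $s(S_3^n) = 3$. Theorem~\ref{thm:all}(i) then gives at once $\gpt(S_3^n) = s(S_3^n) = 3$ and forces any $\gpt$-set to lie inside $S(S_3^n)$, so $\{0^n, 1^n, 2^n\}$ is the unique $\gpt$-set.

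For $\gpd(S_3^n)$ I would sandwich the value. Lemma~\ref{lem:tau lower bound} gives $\gpd(S_3^n) \geq s(S_3^n) = 3$. In the other direction, dual general position implies dual mutual-visibility directly from the definitions (``every shortest path avoids $X$'' is stronger than ``some shortest path does''), so Theorem~\ref{thm:S3n-mu-total-dual} yields $\gpd(S_3^n) \leq \mud(S_3^n) = 3$; hence $\gpd(S_3^n) = 3$.

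For uniqueness, every $\gpd$-set is in particular a $\mud$-set, so it must be one of the four sets enumerated in the proof of Theorem~\ref{thm:S3n-mu-total-dual}. Each of the four is readily checked to be a general position set (the three ``corner triangle'' candidates $V(\underline{i^{n-1}}S_3^1)$ induce a $K_3$, while $\{0^n, 1^n, 2^n\}$ is in general position because a simplicial vertex is never internal to a shortest path), so by Theorem~\ref{thm:all}(ii) the question reduces to deciding which $X$ make $S_3^n - X$ convex. The same simplicial-vertex remark shows that $S_3^n - \{0^n, 1^n, 2^n\}$ is convex. For each $X = V(\underline{i^{n-1}}S_3^1)$ I would refute convexity by exhibiting two shortest paths between vertices of $V(S_3^n)\setminus X$, exactly one of which passes through $X$; concretely for $i=0$ and $n\geq 3$ the vertices $u = 0^{n-2}10$ and $v = 0^{n-2}20$ admit the two length-three shortest paths $u, 0^{n-1}1, 0^{n-1}2, v$ and $u, 0^{n-2}12, 0^{n-2}21, v$, of which the first passes through $X$. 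The small cases $n=1$ (where $S_3^1 = K_3$) and $n=2$ (where Corollary~\ref{cor:gpd-Sp2} applies) are immediate.

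The main obstacle is this last convexity analysis for the three corner-triangle sets, which relies on the specific shortest-path witnesses above; the remainder of the argument is essentially bookkeeping on top of Theorems~\ref{thm:S3n-mu-total-dual} and~\ref{thm:all}.
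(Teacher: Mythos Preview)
Your proposal is correct and the sandwich $3 = \gpt(S_3^n) \le \gpd(S_3^n) \le \mud(S_3^n) = 3$ is exactly the paper's argument. Where you differ is in the uniqueness of the $\gpd$-set: the paper disposes of $\#\gpd(S_3^n)=1$ in a single sentence (``the maximum sets from Theorem~\ref{thm:S3n-mu-total-dual} remain also for the total/dual general position case''), while you actually carry out the check---invoking Theorem~\ref{thm:all}(ii) and exhibiting, for each corner-triangle candidate $X = V(\underline{i^{n-1}}S_3^1)$, an explicit pair of vertices in $V(S_3^n)\setminus X$ whose shortest paths are not all contained in $S_3^n - X$. Your treatment is thus more complete than the paper's on this point; the extra work is genuine but short, and your witness $u=0^{n-2}10$, $v=0^{n-2}20$ with the two length-three geodesics is precisely what is needed.
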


\proof
Using Theorem~\ref{thm:all}(i) and Theorem~\ref{thm:S3n-mu-total-dual} we have
$$3 = \gpt(S_3^n) \le \gpd(S_3^n) \le \mud(S_3^n) = 3\,.$$
This in turn also implies that the maximum sets from Theorem~\ref{thm:S3n-mu-total-dual} remain also for the total/dual general position case.
\qed

Next we focus on the mutual-visibility set. We first settle small cases which will serve as our basis for the later induction.

\begin{proposition}\label{prop:mu-sierpinski-small}
The following holds. 
\begin{enumerate}
\item[(i)] $\mu(S_3^2) = 4$. Moreover, the sets $\{ii, ij, kj, kk\}$, where $\{i,j,k\} = [3]_0$, are the unique $\mu$-sets of $S_3^2$.
\item[(ii)] $\mu(S_3^3) = 6$.  Moreover, if $X$ is a $\mu$-set of $S_3^3$, then either $|X \cap V(iS_3^{3})| = 2$ for every $i \in [3]_0$, or $|X \cap V(iS_3^{3})| = 3$ for exactly two $i \in [3]_0$. 
\item[(iii)] $\mu(S_3^4) = 12$. Moreover, if $X$ is a $\mu$-set of $S_3^4$, then $|X \cap V(iS_3^{3})| = 4$ for $i \in [3]_0$. In addition, if $X$ is a mutual-visibility set of $S_3^4$, then $|X \cap (V(iS_3^{3}) \cup V(jS_3^{3}))| \le 10$ for $i,j \in [3]_0$.
\end{enumerate}
\end{proposition}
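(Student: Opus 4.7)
Part (i) is a specialization of Theorem~\ref{thm:mu-Sp2} with $p = 3$: the formulas give $\mu(S_3^2) = (3+1)^2/4 = 4$ and $\#\mu(S_3^2) = \binom{3}{2} = 3$, and the enumeration in its proof shows that each $\mu$-set is determined by the unique subgraph $kS_3^1$ missed by $X$, producing the form $\{ii, ij, kj, kk\}$ with $\{i,j,k\}=[3]_0$.

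For Part (ii), I would establish the lower bound by exhibiting an explicit symmetric $6$-element mutual-visibility set, for instance $X = \{010, 020, 101, 121, 202, 212\}$, and verify visibility of the $15$ pairs using the fact that between distinct $iS_3^2$ and $jS_3^2$ every shortest path traverses the single connecting edge between $ij^2$ and $ji^2$. For the upper bound, the key observation is a \emph{gateway principle}: if $X$ is a mutual-visibility set of $S_3^n$ with $g_{ij} := ij^{n-1} \in X$ and $|X \cap V(iS_3^{n-1})| \ge 2$, then $X \cap V(jS_3^{n-1}) = \emptyset$, since for any non-gateway $v \in X \cap V(iS_3^{n-1})$ and any $w \in V(jS_3^{n-1})$ the unique shortest $v,w$-path has $g_{ij}$ as an internal vertex. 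Writing $x_i := |X \cap V(iS_3^2)|$, Lemma~\ref{lem:mu-convex} and Part~(i) yield $x_i \le 4$; if $x_i = 4$ then $X \cap V(iS_3^2)$ is one of the three $\mu$-sets from (i), each containing a gateway vertex, so by the gateway principle one adjacent copy is empty and a short further check empties the second, giving $|X|=4$. Hence $|X| \ge 5$ forces $x_i \le 3$ for all $i$, and a case analysis on the distribution $(x_0, x_1, x_2)$ combined with the outer-visibility requirement (each vertex of a non-empty copy must see the gateway toward each populated neighbour via a path avoiding $X$) rules out all sums exceeding $6$ and leaves only the profiles $(2,2,2)$ and $(3,3,0)$ up to permutation.

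For Part (iii), the plan lifts the above one scale. The lower bound is supplied by an analogous symmetric construction with $4$ vertices per $iS_3^3$, totalling $12$. For the upper bound, Lemma~\ref{lem:mu-convex} together with Part~(ii) gives $x_i := |X \cap V(iS_3^3)| \le 6$. The bound $x_i + x_j \le 10$ is proved by contradiction: $x_i + x_j \ge 11$ forces $x_i = 6$ (say), so by Part~(ii) the set $X \cap V(iS_3^3)$ has internal distribution $(2,2,2)$ or $(3,3,0)$ among its three level-$2$ subcopies; in either case the subcopy containing the level-$3$ gateway $ij^3$ holds at least two $X$-vertices, and the gateway principle applied at level~$3$ then forces $x_j \le 4$, contradicting $x_j \ge 5$. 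To push $|X|$ down to $12$ with $x_i = 4$ for each $i$, one must rule out the remaining distributions summing to $12$ with each $x_i \le 6$ and every pair sum $\le 10$, namely $(6,4,2), (6,3,3), (5,5,2), (5,4,3)$; this is handled by a second-level gateway analysis exploiting the dichotomy of Part~(ii) applied to each $X \cap V(iS_3^3)$ with $x_i \ge 5$, which shows that the compatible $x_j, x_k$ contributions fall strictly below what is needed. The hardest step is this last case analysis: the asymmetric $(3,3,0)$ internal sub-profile must be tracked relative to the three possible level-$3$ gateway directions from each copy, and the different alignments dictate substantially different blocking patterns for the other copies.
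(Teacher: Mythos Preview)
Your handling of Part~(i) matches the paper's. For Parts~(ii) and~(iii), however, the ``gateway principle'' on which you build the upper bounds is false as stated. In $S_3^2$ the set $\{01,02,12\}$ is a mutual-visibility set: here $01$ is the gateway $g_{01}$, $|X\cap V(0S_3^1)|=2$, yet $12\in X\cap V(1S_3^1)$, since $02$ and $12$ are $X$-visible via the alternate geodesic $02,20,21,12$ through copy~$2$. The flaw is the uniqueness claim: whenever $v$ is near $ik^{n-1}$ and $w$ near $jk^{n-1}$, a second geodesic runs through $kS_3^{n-1}$. The same happens in $S_3^3$ (e.g.\ $\{011,022,122\}$ is a mutual-visibility set), so the principle cannot carry the upper-bound case analysis in either part.

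There is a second concrete gap in Part~(iii). When $x_i=6$ has internal profile $(3,3,0)$, the empty level-$2$ subcopy can be $ijS_3^2$, the one containing the gateway $ij^3$; your assertion that ``in either case the subcopy containing the level-$3$ gateway holds at least two $X$-vertices'' is therefore unfounded, and the deduction $x_j\le 4$ collapses.

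The paper does not use any blanket gateway principle. For (ii) it performs a direct case split on $|T\cap V(0S_3^2)|\in\{3,4\}$, refining the $3$-case by how $T$ meets $Y_1=\{000,001,010,011\}$ and $Y_2=\{000,002,020,022\}$. For (iii) the pairwise bound $x_i+x_j\le 10$ is obtained by showing that if $x_2=6$ and $x_1\ge 5$ one can adjoin $2111$ to $T\cap V(2S_3^3)$ and obtain a mutual-visibility set of $S_3^3$ of size~$7$; the final step ruling out $x_0\ge 5$ is a fine case analysis of how $T$ meets the bulls $\underline{0}B_i^4$ and the induced $12$-cycle inside $0S_3^3$. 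Your sketch would need to be reworked along these lines, tracking alternate geodesics through the third copy rather than assuming they do not exist.
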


\proof  
(i) This follows from Theorem~\ref{thm:mu-Sp2} and its proof, see also the top part of Fig.~\ref{fig:mu-S32-S42}. 

\medskip
(ii) The set $X = \{iji: i\neq j \text{ and } i, j \in [3]_0\}\subseteq V(S_3^3)$ is a mutual-visibility set of $S_3^3$, hence $\mu(S_3^3) \geq 6$. 

To prove that $\mu(S_3^3) \leq 6$, consider an arbitrary $\mu$-set $T$ of $S_3^3$. Since $iS_3^2$, $i\in [3]_0$, is a convex subgraph of $S_3^3$, we have $|T \cap V(iS_3^2)|\leq 4$ by (i). There is nothing to prove that $|T|\leq 6$ if $|T \cap V(iS_3^2)| \leq 2$ for $i \in [3]_0$. In this case we also have  that $|T \cap V(iS_3^{3})| = 2$, for $i \in [3]_0$. In the rest we may hence without loss of generality assume that $|T \cap V(0S_3^2)| \ge 3$.  

Assume first that $|T \cap V(0S_3^2)| = 4$. Then these vertices must be (up to symmetry) $000, 011, 021, 020$ or $011, 010, 020, 022$. In both cases we infer that no vertex from $V(1S_3^2)\cup V(2S_3^2)$ lies in $T$, hence $|T| = 4$. 

We are left with the case when $|T \cap V(0S_3^2)| = 3$. Setting $Y_1 = \{000,001,010,011\}$ we see that $|T\cap Y_1|\leq 2$. Moreover, if $|T\cap Y_1| = 2$, then $T \cap V(1S_3^2) = \emptyset$. As in the case where $|T\cap V(2S_3^2)|\geq 4$ was previously ruled out, we can conclude that $|T|\leq 6$. The same conclusion can be derived by considering the set $Y_2 = \{000,002,020,022\}$. Hence assume in the rest that $|T\cap Y_1| \leq 1$ and $|T\cap Y_2| \leq 1$ in the rest of this proof and consider the following subcases. 

Consider the case where $|T\cap Y_1| = 1$ and $|T\cap Y_2| = 1$. Assume first that $000\in T\cap Y_1$. Then since $|T \cap V(0S_3^2)| = 3$, we have $T \cap V(0S_3^3) = \{000, 012, 021\}$. Since $021, 012, 1ij$ is the unique shortest path between $021$ and each vertex $1ij\in V(1S_3^2)\setminus \{122\}$, we have  $|T \cap V(1S_3^2)| \le 1$.  Analogously, $012, 021, 2ij$ is the unique shortest path between $012$ and each vertex $2ij\in V(1S_3^2)\setminus \{211\}$, we have  $|T \cap V(2S_3^2)| \le 1$. Hence $|T| \leq 5$ in this case. 

Assume second that $000\notin T\cap Y_1$. Since we have assumed that $|T\cap V(0S_3^3)| = 3$, we get $|\{001, 010, 011\}\cap T| = 1$, $|\{002, 020, 022\}\cap T| = 1$, and $|\{012,021\}\cap T| = 1$. Assume first that $011\in T$. Since $|\{012,021\}\cap T| = 1$, we infer that $T$ can have at most one vertex in $1S_3^2$, and if so, this vertex is $122$. Now, if $122\notin T$, then $T$ has vertices only in $0S_3^2$ and in $2S_3^2$, hence by our case assumption $|T|\le 6$. And if $122\in T$, then $T\cap V(2S_3^2) = \emptyset$, and we have $|T|\le 4$. Analogously we get the conclusion if $022\in T$. Hence we are left with the cases when  $|\{001, 010\}\cap T| = 1$, $|\{002, 020\}\cap T| = 1$, and $|\{012,021\}\cap T| = 1$. Then a case analysis similar to the above leads to the required conclusion. Since we have assumed $|T\cap V(0S_3^3)| = 3$, we are now left with the case where exactly one among $T \cap Y_1$ and $T \cap Y_2$ is of cardinality 1 and the other is empty. As in the proof of the previous cases, none other than 122 and 211 can be in $T$ from $V(1S_3^2) \cup V(2S_3^2)$. Hence, $|T| \leq 5$ in this case. We can conclude that in each case $|T| \leq 6$, and therefore  $\mu(S_3^3) = 6$. In each of the cases, we also see that if $|T \cap V(0S_3^2)| = 3$, then $|T \cap V(iS_3^2)| = 3$ and $|T \cap V(jS_3^2)| = 0$, where $\{i,j\} = \{1,2\}$ (or otherwise $|T| < 6$). 

\medskip
(iii) (To help the reader follow this part of the proof, we invite the reader to use Fig.~\ref{fig:S_3^4}.) Consider $S_3^4$ and let $X = \{iiii, i012, i120, i201:\ i \in [3]_0\}$. Since $X$ is a mutual-visibility set of cardinality $12$, we get $\mu(S_3^4) \geq 12$.

To prove the reverse inequality, let $T$ be a mutual-visibility set of $S_3^4$. Since each $iS_3^3$ is a convex subgraph of $S_3^4$, combining Lemma~\ref{lem:mu-convex} with (i) we have $|T\cap V(iS_3^3)|\le 6$. We can also observe that $V(iS_3^3) \cup V(jiS_3^2) \cup V(jjiS_3^1) \cup \{jjji, jjjj\}$ induces a convex subgraph of $S_3^4$ for $i, j \in [3]_0$.

We claim that $|T \cap (V(iS_3^3) \cup V(jS_3^3))| \le 10$ for $i, j \in [3]_0$. Let $|T \cap V(2S_3^3)| = \max\{|T \cap V(iS_3^3)|:\ i\in [3]_0\}$. In view of (ii), we have $|T \cap V(2S_3^3)| \leq 6$.
If $|T \cap V(2S_3^3)|\leq 5$, there is nothing to prove and the inequality holds. If $|T \cap V(2S_3^3)| = 6$, we will show that $|T \cap V(iS_3^3)|\leq 4$ for each $i\in [2]_0$. 
Suppose not and assume, without loss of generality, that $|T \cap V(1S_3^3)| \ge 5$. Since $|T \cap V(2S_3^3)| = 6$, by $(ii)$ we have $|T\cap V(2iS_3^2)|\leq 3$ for each $i\in [3]_0$. It follows that $|T\cap (V(20S_3^2)\cup V(220S_3^1)\cup\{2220\})|\leq 5$ and $|T \cap (V(21S_3^2) \cup V(221S_3^1) \cup \{2221, 2222\})| \ge 1$. 

Next, we show that $|T \cap (V(12S_3^2) \cup V(112S_3^1) \cup \{1112, 1111\})| \ge 1$. It is straightforward to check that $|T \cap (V(21S_3^2) \cup V(221S_3^1) \cup \{2221, 2222\})| \leq 2$ if $|T\cap V(10S_3^2)|\neq 0$. By $(i)$ we known that $|T\cap V(1iS_3^2)|\leq 4$ for each $i\in [3]_0$. But if $|T\cap V(1iS_3^2)| = 4$, then $|T\cap V(1jS_3^2)| = 0$ for $j\in [3]_0\setminus \{i\}$. Hence $|T\cap V(1S_3^3)| = 4$ contradicts what we have assumed $|T \cap V(1S_3^3)| \ge 5$, so $|T\cap V(1iS_3^2)|\leq 3$ for each $i\in [3]_0$. 
Assume first that $|T\cap V(10S_3^2)| \leq 2$. Since $|T \cap (V(21S_3^2) \cup V(221S_3^1) \cup \{2221, 2222\})| \leq 2$, we have
$|T\cap (V(10S_3^2)\cup V(110S_3^1)\cup\{1110\})|\leq 4$, then our assumption implies that $|T \cap (V(12S_3^2) \cup V(112S_3^1) \cup \{1112, 1111\})| \ge 1$. 

Assume second that $|T\cap V(10S_3^2)| = 3$.  If $|T\cap (V(110S_3^1)\cup\{1110\})| \leq 1$,  then $|T \cap (V(12S_3^2) \cup V(112S_3^1) \cup \{1112, 1111\})| \ge 1$ as the assumption $|T \cap V(1S_3^3)| \ge 5$. If $|T\cap (V(110S_3^1)\cup\{1110\})| =2$, since the vertex $1100$ lies on the unique shortest path between a vertex of $T\cap \{1101, 1102, 1110\}$ and each vertex of $T\cap V(10S_3^2)$, we see that $1100\notin T$. Moreover, the vertices $1110$, $1101$, and $1102$ lie on a convex $P_3$ in $S_3^4$, then $1110\in T$ or $1101\in T$. Furthermore, if $\{1101,1102\}\subseteq T\cap V(1S_3^3)$ is the case, since $|T \cap (V(21S_3^2) \cup V(221S_3^1) \cup \{2221, 2222\})| \ge 1$, the vertex $1102$ lies on the unique shortest path between $1101$ and a vertex of $T \cap (V(21S_3^2) \cup V(221S_3^1) \cup \{2221, 2222\})$. This is a contradiction.  If $\{1110, 1102\}\subseteq T$, it is easy to verify that only $\{1000, 1002, 1012\}\subseteq T\cap V(10S_3^2)$ is the case. But the vertex $1102$ lies on the unique shortest path between $1000$ and a vertex of $T \cap (V(21S_3^2) \cup V(221S_3^1) \cup \{2221, 2222\})$, a contradiction. Therefore, $|T\cap (V(10S_3^2)\cup V(110S_3^1)\cup\{1110\})|\leq 4$, we obtain $|T \cap (V(12S_3^2) \cup V(112S_3^1) \cup \{1112, 1111\})| \ge 1$. 

Let $x \in T \cap (V(12S_3^2) \cup V(112S_3^1) \cup \{1112, 1111\})$. Since each vertex in $T \cap V(2S_3^3)$ is $T$-visible with $x$, we obtain $2111 \notin T \cap V(2S_3^3)$ and each vertex in $T \cap V(2S_3^3)$ is $T\cap V(2S_3^3)$-visible with $2111$. This implies that $(T \cap V(2S_3^3)) \cup \{2111\}$ is a mutual-visibility set of $S_3^3$ of cardinality seven, which is a contradiction.  
As a consequence, we conclude that $|T \cap (V(iS_3^3) \cup V(jS_3^3))| \le 10$ for $i, j \in [3]_0$.

Now assume that $T$ is an arbitrary $\mu$-set of $S_3^4$. We show that $|T\cap V(iS_3^3)|\le 4$ for each $i\in [3]_0$. Suppose not and we may without loss of generality let $|T \cap V(0S_3^3)| \ge 5$. Since we have proved that $|T| \ge 12$ and $|T \cap (V(iS_3^3) \cup V(jS_3^3))| \le 10$ for each $i, j \in [3]_0$, it follows that $|T\cap V(iS_3^3)|\ge 2$ for each $i \in [2]$. In fact, there are only two cases.  Either $|T\cap V(iS_3^3)|\ge 4$ for some $i\in [2]$ or $|T\cap V(iS_3^3)|\ge 3$ for each $i \in [2]$. For each $i \in [2]$, since $|T\cap V(iS_3^3)|\ge 2$, we get 
$$ |T \cap \{0000, 000i, 00i0, 00ii, 0i00, 0i0i, 0ii0, 0iii\}| \le 1\,.$$

Let $T' = T \cap \{0111, 0112, 0121, 0122, 0211, 0212, 0221, 0222\}$. 
It is also observed that $|T'|\le 2$. Assume that $|T'| = 2$, let $x_i$ be the vertex in $T'$ closer to $iS_3^3$ for each $i\in [2]$. Since $|T\cap V(iS_3^3)|\ge 2$, it is straightforward to check that $x_i\in \{0iii, 0iij\}$, where $i\in [2]$. If for some $i\in [2]$ we have $|T\cap V(iS_3^3)|\ge 4$, there is no choice for $x_j$, where $x_j\in T'$ and $j\in [2]\setminus\{i\}$, which is a contradiction. If $|T\cap V(iS_3^3)|\ge 3$ for each $i \in [2]$, then $x_i = 0iii$, which is again not possible. These two contradictions imply that $|T'|\leq 1$. 

Recall the definition of bull graph $\underline{s}B_i^n$, where $i\in [3]_0$. Since assumption $|T \cap V(0S_3^3)| \ge 5$, we have $|T \cap V(0B_i^4)| \le 2$ for $i\in [3]_0$. If $|T\cap V(0B_i^4)| = 2$ for some $i\in [2]$, then all the remaining vertices in $T \cap V(0S_3^3)$ must be from $V(0jjS_3^1)\cup \{0j0j\}$ as $|T \cap V(iS_3^3)| \ge 2$. But $|T \cap (V(0jjS_3^1)\cup \{0j0j\})| \le 2$, which is a contradiction to $|T \cap V(0S_3^3)| \ge 5$. (Notice that $\{0010, 0020\} \subseteq T \cap V(0B_0^4)$.) This contradiction implies that $|T \cap V(0B_i^4)| \le 1$ for each $i\in [2]$.

Now, let $C$ be the set of twelve vertices in $0S_3^3$ whose induced subgraph is a cycle $C_{12}$. Then $2 \le |T \cap C| \le 3$. 

Consider first the case case $|T \cap C| = 2$. Since we have proved that $|T \cap \{0000, 000i, 00i0, 00ii, 0i00, 0i0i, 0ii0, 0iii\}| \le 1$ and $|T'|\leq 1$, where $i\in [2]$, the assumption $|T \cap V(0S_3^3)| \ge 5$ implies that $T$ does not intersect $\{00ii, 0i00, 0ijj:\ i, j \in [2] \}$. It follows that $T$ intersects each of the three sets $\{0iii, 0iij, 0iji:\ i, j \in [2]\}$ and $\{0000, 000i, 00i0, 0i0i, 0ii0, 0iii\}$ for $\{i,j\} = [2]$. There are then two subcases. If $T$ intersects $\{0000, 000i, 00i0\}$, $\{0iii, 0iij, 0iji\}$, and $\{0jjj, 0jj0, 0j0j\}$ for some $i\in [2]$, we see that $T \cap(V(0S_3^3)\setminus C) \subseteq (V(0iiS_3^1) \cup \{0i0i\})$ since $|T \cap V(jS_3^3)| \ge 2$, where $j\in [3]_0$,  which is a contradiction. In the other subcase, $0010, 0020 \in T$, and $T$ intersects $\{0iii, 0iij,0iji\}$ for some $i \in [2]$. Then $T$ does not intersect $\{0012, 0021\}$. Hence $T$ intersects $\{0102, 0120\}$ and $\{0201, 0210\}$. It follows that $T \cap (V(0S_3^3)\setminus C) \subseteq V(0jjS_3^1) \cup \{0j0j\}$ since $|T \cap V(iS_3^3)| \ge 2$, which is again a contradiction. 

Consider next the case $|T \cap C| = 3$. Since $|T \cap V(0S_3^3)| \ge 5$, the set $T$ intersects at least two of the sets $\{0iii, 0iij, 0iji:\ i, j \in [2]\}$ and $\{0000, 000i, 00i0, 0i0i, 0ii0, 0iii\}$ for $i \in [2]$. Thus $T$ intersects each of the three sets $\{0011, 0012, 0021, 0022\}$ and $\{0i00, 0i0j, 0ij0, 0ijj\}$ for $i, j \in [2]$. Since $|T \cap V(iS_3^3)| \ge 2$ for $i \in [2]$, this implies that $T$ does not intersect $\{00i0, 0i00, 0i0i, 0ii0, 0iii, 0iij, 0iji, 0ijj:\ i, j \in [2] \}$. Consequently, we can conclude that $0001$ and $0002$ are in $T$, which is a contradiction, since $T$ already intersects $\{0i0j, 0ij0:\ i, j \in [2]\}$. 

Hence $\mu(S_3^4) = 12$. Moreover, if $X$ is a $\mu$-set of $S_3^4$, then $|X \cap V(iS_3^{3})| = 4$ for $i \in [3]_0$.
\qed

\begin{theorem}\label{mu-sierpinski}
If $n\geq 2$, then $\mu(S_3^n) = 3^{n-2} + 3$. Moreover, if $n \geq 4$ and $X$ is a $\mu$-set of $S_3^n$, then $|X \cap V(iS_3^{n-1})| = 3^{n-3}+1$ for $i \in [3]_0$. In addition, if $n \geq 4$ and $X$ is a mutual-visibility set of $S_3^n$, then $|X \cap (V(iS_3^{n-1}) \cup V(jS_3^{n-1}))| \le 2.3^{n-3} + 4$ for $i,j \in [3]_0$.
\end{theorem}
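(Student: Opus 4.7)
The plan is induction on $n$, with base $n=4$ supplied by Proposition~\ref{prop:mu-sierpinski-small}(iii). Along the induction I will maintain the auxiliary invariant that $S_3^n$ admits a $\mu$-set containing the three extreme corners $0^n, 1^n, 2^n$; this holds at $n=4$ via the explicit $\mu$-set $\{iiii, i012, i120, i201 : i \in [3]_0\}$. Fix $n \geq 5$ and assume the theorem together with the invariant for $S_3^{n-1}$.

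For the lower bound, take a $\mu$-set $Y$ of $S_3^{n-1}$ containing the three corners and set $Z = Y \setminus \{0^{n-1}, 1^{n-1}, 2^{n-1}\}$, so $|Z|=3^{n-3}$. Define
\[
X = \{0^n, 1^n, 2^n\} \cup \bigcup_{i \in [3]_0} \{is : s \in Z\}\,,
\]
which has size $3 + 3 \cdot 3^{n-3} = 3^{n-2}+3$ and contains the three corners of $S_3^n$, so the invariant is re-established once $X$ is shown to be a $\mu$-set. Visibility within each copy $iS_3^{n-1}$ is inherited from $Y$: the restriction corresponds, after stripping the leading $i$, to $\{i^{n-1}\} \cup Z \subseteq Y$. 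For cross-copy visibility of $u \in iS_3^{n-1}$ and $v \in jS_3^{n-1}$ with $i \neq j$, the unique shortest $u,v$-path crosses the bridge $b_{ij}b_{ji}$ with $b_{ij} = i \cdot j^{n-1}$; neither endpoint lies in $X$, because $j^{n-1}$ and $i^{n-1}$ were removed from $Y$ when forming $Z$, and the interior of the $u\to b_{ij}$ portion inside $iS_3^{n-1}$ reflects the shortest path in $S_3^{n-1}$ from a $Y$-vertex to the corner $j^{n-1} \in Y$, whose interior avoids $Y$.

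For the upper bound I first establish a pairwise bound: writing $X_k = X \cap V(kS_3^{n-1})$, for any mutual-visibility set $X$ of $S_3^n$ and $i \neq j$, $|X_i| + |X_j| \leq 2 \cdot 3^{n-3}+4$. The only nontrivial case is $X_i, X_j \neq \emptyset$ and $b_{ij}, b_{ji} \notin X$, when I will show that $X_i \cup \{b_{ij}\}$ is a mutual-visibility set of $iS_3^{n-1}$, so that $|X_i|+1 \leq \mu(S_3^{n-1}) = 3^{n-3}+3$ by induction, and symmetrically for $X_j$. The key observation enabling this is a corner shortcut: $b_{ij}$ is a degree-two vertex of $iS_3^{n-1}$ whose two neighbors are adjacent, so $b_{ij}$ can never lie interior to a shortest path in $iS_3^{n-1}$, since the two-step detour through $b_{ij}$ would be strictly shortened by the edge between its two neighbors. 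Visibility of pairs in $X_i$ then follows from convexity of $iS_3^{n-1}$ and $X$-visibility in $S_3^n$, and visibility of a pair $u \in X_i$, $b_{ij}$ is witnessed by the prefix of the unique shortest $u,w$-path in $S_3^n$ for any $w \in X_j$.

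Finally, for the upper bound $\mu(S_3^n) \leq 3^{n-2}+3$ and the per-copy equality, the pairwise bound alone only gives $|X| \leq 3^{n-2}+6$, so a refinement is needed. Assume $X$ is a mutual-visibility set with $|X| \geq 3^{n-2}+3$; since $2\mu(S_3^{n-1}) = 2\cdot 3^{n-3}+6 < 3^{n-2}+3$ for $n \geq 5$, all three $X_i$ are nonempty. I will argue that no $|X_i|$ exceeds $3^{n-3}+1$: if some $|X_i| \in \{3^{n-3}+2, 3^{n-3}+3\}$, the induction hypothesis pins down the distribution of $X_i$ across the sub-copies $ikS_3^{n-2}$ (the $\mu$-set case) or across near-$\mu$ configurations, and combining this with the pairwise bound applied to all three pairs of copies and with the forced absences in the other two copies imposed by uniqueness of bridge-crossing shortest paths, one gets $|X| < 3^{n-2}+3$, a contradiction. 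Summing $|X_i| \leq 3^{n-3}+1$ yields $|X| \leq 3^{n-2}+3$, and equality at $|X| = 3^{n-2}+3$ forces $|X_i| = 3^{n-3}+1$ for each $i$. The main obstacle is this last step: ruling out $|X_i| \geq 3^{n-3}+2$ requires delicate bookkeeping of how a near-maximal mutual-visibility set in one copy restricts the other two through the bridges $b_{ij}, b_{ik}$, essentially lifting to general $n$ the painstaking case analysis carried out for the base case $n=4$ in Proposition~\ref{prop:mu-sierpinski-small}(iii), now powered by the inductive fractal structure.
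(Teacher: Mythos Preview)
Your architecture---induction on $n$, a corner-preserving construction for the lower bound, and an ``add a corner and contradict $\mu(S_3^{n-1})$'' mechanism for upper bounds---matches the paper's. But two steps have genuine gaps.

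\textbf{The direct-bridge assumption is false.} You repeatedly assert that for $u\in iS_3^{n-1}$ and $v\in jS_3^{n-1}$ the (unique) shortest $u,v$-path crosses $b_{ij}b_{ji}$. In $S_3^n$ there can be two shortest paths, and a shortest path may go entirely through the third copy $kS_3^{n-1}$. This breaks your pairwise-bound argument: from $X_j\neq\emptyset$ alone you cannot conclude that $X_i\cup\{b_{ij}\}$ is a mutual-visibility set of $iS_3^{n-1}$, because the $X$-avoiding shortest $u,w$-path may pass through $b_{ik}$ rather than $b_{ij}$. (The case $b_{ij}\in X$, which you dismiss as trivial, is also not: it yields $|X_j|\le 3^{n-3}+2$ but gives no improvement on $|X_i|$.) The paper's fix is to first apply the \emph{inductive} pairwise bound inside $jS_3^{n-1}$: if $|X_j|\ge 3^{n-3}+2$, then $|X_j\cap(V(jjS_3^{n-2})\cup V(jkS_3^{n-2}))|\le 2\cdot 3^{n-4}+4<3^{n-3}+2$, so $X$ meets $jiS_3^{n-2}$; and for $x$ in that sub-copy every shortest path to $iS_3^{n-1}$ \emph{does} go through $b_{ji}b_{ij}$, so the corner-adding trick now works and yields the contradiction.

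\textbf{The final step is not a lift of the $n=4$ case analysis.} You describe ruling out $|X_i|\ge 3^{n-3}+2$ as ``delicate bookkeeping \ldots\ lifting the painstaking case analysis for $n=4$'', and provide no argument. The paper's argument for $n\ge 6$ is in fact short and clean. From the pairwise bound one deduces the \emph{lower} bound $|X_k|\ge |X|-(2\cdot 3^{n-3}+4)\ge 3^{n-3}-1$ for every $k$. Applying the inductive pairwise bound inside each $kS_3^{n-1}$ ($k=1,2$) then forces $X$ to intersect $k0S_3^{n-2}$, since $2\cdot 3^{n-4}+4<3^{n-3}-1$ when $n\ge 6$. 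Vertices in $k0S_3^{n-2}$ have all their shortest paths into $0S_3^{n-1}$ through $0k^{n-1}$, so now \emph{both} corners $01^{n-1}$ and $02^{n-1}$ can be adjoined to $X_0$ while preserving mutual visibility, giving a set of size $\ge 3^{n-3}+4>\mu(S_3^{n-1})$---contradiction. This two-corner idea is the missing ingredient; the hard $n=4$ analysis serves only as the induction base (and $n=5$ needs a separate small patch because $2\cdot 3^{n-4}+4\not<3^{n-3}-1$ there).
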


\proof
Proposition~\ref{prop:mu-sierpinski-small} yields the correctness of the formula for $n\le 4$. Also, note that when $n=4$, $2.3^{n-3} + 4 = 10$, so that the remaining part of the statement also follows from Proposition~\ref{prop:mu-sierpinski-small}. Hence assume in the rest that $n\ge 5$. The set 
$$X = \{s012, s120, s201:\ s \in [[3]_0]^{n-3}\} \cup \{i^n:\ i \in [3]_0\}$$ 
is a mutual-visibility set of cardinality $3^{n-2} + 3$. Therefore, $\mu(S_3^n) \geq 3^{n-2} + 3$. Also, note that $|X\cap V(iS_3^{n-1})| = 3^{n-3}+1$, for $i \in [3]_0$. 
   
We first claim that if $T$ is a mutual-visibility set of $S_3^n$, then $|T \cap (V(iS_3^{n-1}) \cup V(jS_3^{n-1}))| \le 2.3^{n-3} + 4$ for $i,j \in [3]_0$. For this it is enough to show that if $|T \cap V(iS_3^{n-1})| = 3^{n-3} + 3$ for some $i \in [3]_0$ then $|T \cap V(jS_3^{n-1})| \le 3^{n-3} + 1$ for each $j \in [3]_0 \setminus\{i\}$. Assume the contrary. Without loss of generality let $|T \cap V(2S_3^{n-1})| = 3^{n-3} + 3$ and $|T \cap V(1S_3^{n-1})| \ge 3^{n-3} + 2$. Considering $1S_3^{n-1}$, by induction hypothesis, we know that $|T \cap (V(10S_3^{n-2}) \cup V(11S_3^{n-2}))| \le 2.3^{n-4} + 4$. Since $2.3^{n-4} + 4 < 3^{n-3} + 2$, we get that $T$ intersects $12S_3^{n-2}$. This implies that $21^{n-1} \notin T$ and each vertex in $T \cap V(2S_3^{n-1})$ is $T \cap V(2S_3^{n-1})$-visible with $21^{n-1}$. Hence by adding $21^{n-1}$ to the set $T \cap V(2S_3^{n-1})$ we obtain a mutual-visibility set of $S_3^{n-1}$ with cardinality $3^{n-3} + 4$, which is a contradiction to our induction hypothesis. This contradiction proves the claim. 

If $i, j, k \in [3]_0$, then by the fact that $\mu(S_3^n) \geq 3^{n-2} + 3$ and by the above claim we obtain that 
\begin{align}
|T \cap V(iS_3^{n-1})| & = |T| - |T \cap (V(jS_3^{n-1}) \cup V(kS_3^{n-1}))| \nonumber \\
& \ge (3^{n-2} + 3) - (2 \cdot 3^{n-3} + 4) \nonumber \\
& = 3^{n-3} - 1\,. \label{eq:lower}    
\end{align}
Now suppose $T$ is a $\mu$-set of $S_3^{n}$. Then $|T| \ge 3^{n-2} + 3$.  If $|T \cap V(iS_3^{n-1})| \leq 3^{n-3} + 1$ for each $i \in [3]_0$, then we are done. Suppose now that, without loss of generality, $|T \cap V(0S_3^{n-1})| \ge 3^{n-3} + 2$. Then considering $0S_3^{n-1}$, by induction hypothesis, we know that 
$|T \cap (V(00S_3^{n-2}) \cup V(0iS_3^{n-2}))| \le 2\cdot 3^{n-4} + 4$. Since $2\cdot3^{n-4} + 4 < 3^{n-3} + 2$, we get that $T$ intersects $0iS_3^{n-2}$ for each $i \in [2]$. 

Considering $iS_3^{n-1}$, by induction hypothesis, we know that $|T \cap (V(iiS_3^{n-2}) \cup V(ijS_3^{n-2}))| \le 2\cdot3^{n-4} + 4$. For $n \ge 6$, since $2\cdot3^{n-4} + 4 < 3^{n-3} - 1$, we get that $T$ intersects $i0S_3^{n-2}$ for each $i \in [3]_0$. Hence for $n \ge 6$, we obtain that the vertices on the shortest $01^{n-1},02^{n-1}$-path are not in $T$. In addition, we obtain that every vertex in $T \cap V(0S_3^{n-1})$ is $(T \cap V(0S_3^{n-1}))$-visible with $01^{n-1}$ and $02^{n-1}$. Hence by adding $01^{n-1}$ and $02^{n-1}$ to the set $T \cap V(0S_3^{n-1})$, we get a mutual-visibility set of $S_3^{n-1}$ with cardinality at least $3^{n-3} + 4$, which is a contradiction to our induction hypothesis.

Now we are left with the case $n = 5$. In this case, we claim that $T$ intersects $V(i0S_3^3) \cup V(ii0S_3^2) \cup V(iii0S_3^1) \cup V(iiiiS_3^1)$ for each $i \in [2]$. Assume the contrary. Then by~\eqref{eq:lower} we have $|T \cap V(iS_3^5)| \ge 8$. But now we have, $|T \cap (V(ijS_3^3) \cup V(iijS_3^2) \cup V(iiijS_3^1))| \ge 8$. Then $|T \cap V(ijS_3^3)| \le 5$ since otherwise, we can form a mutual-visibility set of $S_3^3$ with cardinality seven, which is a contradiction. Also, $|T \cap (V(iijS_3^2) \cup V(iiijS_3^1))| \le 3$. Hence the only possibility is that $|T \cap V(ijS_3^3)| = 5$ and $|T \cap (V(iijS_3^2) \cup V(iiijS_3^1))| = 3$. But $|T \cap V(ijS_3^3)| = 5$ happens only if $ij00i$ and $ij00j$ are $T$ in which case, $ij00j$ is not mutually-visible with the vertices in $V(iijS_3^2) \cup V(iiijS_3^1)$. This is a contradiction. Thus $T$ intersects $V(i0S_3^3) \cup V(ii0S_3^2) \cup V(iii0S_3^1) \cup V(iiiiS_3^1)$ for both $i \in [2]$. Then again as mentioned for $n \ge 6$, we can add $01^{4}$ and $02^{4}$ to the set $T \cap V(0S_3^{4})$ to get a mutual-visibility set of $S_3^{4}$ with cardinality at least $13$, which is a contradiction. Thus if $n\geq 5$, then $\mu(S_3^n) = 3^{n-2} + 3$. Moreover, if $X$ is a $\mu$-set of $S_3^n$, then $|X \cap V(iS_3^{n-1})| = 3^{n-3}+1$ for $i \in [3]_0$. 
\qed

\begin{corollary}\label{cor：gp-Spn}
If $n\geq 2$, then $\gp(S_3^n) = 3^{n-2} + 3$.
\end{corollary}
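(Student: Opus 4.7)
First, I would apply the universally valid inequality $\gp(G)\le \mu(G)$ together with Theorem~\ref{mu-sierpinski} to obtain the upper bound $\gp(S_3^n)\le 3^{n-2}+3$. For the matching lower bound, I would revisit the explicit mutual-visibility set
$$X = \{s012,\,s120,\,s201 : s \in [3]_0^{n-3}\} \cup \{i^n : i \in [3]_0\}$$
constructed in the proof of Theorem~\ref{mu-sierpinski} and argue that $X$ is in fact a general position set, which immediately yields $\gp(S_3^n)\ge 3^{n-2}+3$. This mimics the strategy used in Corollary~\ref{cor-gp} for the base level $S_p^2$.

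The verification of the general position property exploits the fractal structure of $S_3^n$. Given $u,v\in X$, let $s$ be their longest common prefix. Because $\underline{s}S_3^{n-|s|}$ is a convex subgraph of $S_3^n$, every shortest $u,v$-path is contained in $\underline{s}S_3^{n-|s|}$, so I may replace $X$ by its restriction to this subcopy. By the definition of $X$, this restriction is again of the same shape, only living in a smaller Sierpi\'nski graph. Iterating, the whole check reduces to a small number of cases inside $S_3^3$: distinct pairs inside $\{012,120,201\}$, and (when the prefix is $i^{n-3}$) pairs involving $i^3$ and an element of $\{012,120,201\}$. These are routine finite inspections.

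The main obstacle is the possibility of a pair $u,v\in X$ admitting \emph{two} distinct shortest paths, since this is precisely the configuration in which $X$-visibility need not upgrade to $X$-positionability. The seminal result from~\cite{KlaMil-1997} recalled at the start of Section~\ref{sec:Sp2} guarantees that there are at most two such paths in any $S_p^n$, and the alternative shortest path appears only in a specific symmetric pattern of endpoints. The cyclic symmetry $012\to 120\to 201$ and the extreme character of the vertices $i^n$ are exactly what ensures that whenever a second shortest path exists, it traverses only the ``bridge'' vertices between subcopies, which by construction are not in $X$. Once this symmetry argument is carried out at the $S_3^3$ level, the inductive reduction from the preceding paragraph promotes it to all $n\ge 2$, completing the proof.
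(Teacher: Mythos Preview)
Your proposal is correct and follows essentially the same route as the paper: the upper bound via $\gp\le\mu$ and Theorem~\ref{mu-sierpinski}, and the lower bound by observing that the very same set $X$ from that proof is in fact a general position set. The paper's own proof simply asserts this last fact without justification, whereas you sketch a self-similarity reduction to $S_3^3$; your extra detail is reasonable, though note that once you pass to the longest common prefix there is no further ``iteration'' to perform, and the case $n=2$ must be read off from Corollary~\ref{cor-gp} since the description of $X$ only makes sense for $n\ge 3$.
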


\proof
Since $\gp(S_3^n)\leq \mu(S_3^n)$,  Theorem~\ref{mu-sierpinski} implies that $\gp(S_3^n)\leq \mu(S_3^n) = 3^{n-2} + 3$. On the other hand, the set $\{s012, s120, s201:\ s \in [[3]_0]^{n-3}\} \cup \{i^n:\ i \in [3]_0\}$ is a general position set of $S_3^n$ of cardinality $3^{n-2} + 3$ for $n\geq 3$, and we are done. 
\qed

It remains to consider the outer mutual-visibility number and the outer general position number of $S_3^n$. The sets 
\begin{align*}
X & =  \{0^k12^{n-k-1}:\ k \in [n]_0\}\quad {\rm and} \\
Y & = \{0^n, i000^kij^{n-k-4}:\ i,j \in [2], k \in [n-4]_0\}
\end{align*}
are outer general position sets with cardinality $n$ and $2n - 7$, respectively. Thus, if $n\geq 3$, then 
$$\muo(S_3^n) \ge \gpo(S_3^n) \ge \max \{n, 2n - 7\}\,.$$  

\section{Concluding remarks}

We have finished the previous section by bounding from below $\muo(S_3^n)$ and $\gpo(S_3^n)$. It remains open to determine the exact values of $\muo(S_3^n)$ and of $\gpo(S_3^n)$ for $n\geq 3$.

The Sierpi\'nski graph $S_p^2$ is isomorphic to the Sierpi\'nski product graph $K_p\otimes_f K_p$, where $f$ is the identity function. (For the definition of Sierpi\'nski product graphs see the seminal paper~\cite{kovic-2023}, cf.\ also~\cite{henning-2024}.) In~\cite{TianKlavzar-2025b} it was proved that $\min_f \gp(K_p\otimes_f K_p) = p$ and that $\max_f \gp(K_p\otimes_f K_p) = p(p-1)$. Hence Corollary~\ref{cor-gp} asserts that $S_p^2$ is somewhere in between the minimum and the maximum over all functions $f$.
  
\section*{Acknowledgments}
	
Dhanya Roy thank Cochin University of Science and Technology for providing financial support under University JRF Scheme. 
Sandi Klav\v zar was supported by the Slovenian Research and Innovation Agency ARIS (research core funding P1-0297 and projects N1-0285, N1-0355).

\end{document}